\numberwithin{equation}{section}
\newtheorem{theorem}{Theorem}[section]
\newtheorem{lemma}[theorem]{Lemma}
\newtheorem{proposition}[theorem]{Proposition}
\theoremstyle{definition}
\newtheorem{example}[theorem]{Example}
\newtheorem{remark}[theorem]{Remark}
\newtheorem{problem}{Problem}
\newcommand{\GG}{\mathbb{G}}
\def\sl{\mathfrak{sl}}
\newcolumntype{P}[1]{>{\centering\arraybackslash}p{#1}}
\title{On finite-dimensional homogeneous Lie algebras of derivations of polynomial rings}
\author{Ivan Arzhantsev}
\address{HSE University, Faculty of Computer Science, Pokrovsky Boulvard 11, Moscow, 109028 Russia}
\email{arjantsev@hse.ru}
\author{Sergey Gaifullin}
\address{Lomonosov Moscow State University, Faculty of Mechanics and Mathematics, Department of Higher Algebra, Leninskie Gory 1, Moscow, 119991 Russia;
Moscow Center of Fundamental and Applied Mathematics, Moscow, Russia;  
HSE University, Faculty of Computer Science, Pokrovsky Boulvard 11, Moscow, 109028 Russia}
\email{sgayf@yandex.ru}
\author{Viktor Lopatkin}
\address{HSE University, Faculty of Computer Science, Pokrovsky Boulvard 11, Moscow, 109028 Russia}
\email{vlopatkin@hse.ru}
\thanks{The research was done within the framework of the HSE Fundamental Research Program in 2024} 
\subjclass[2020]{Primary 17B05, 17B70; \ Secondary 17B40, 17B66}
\keywords{Polynomial algebra, grading, homogeneous derivation, Lie algebra, finite-dimensional subalgebra}
\begin{document}
\maketitle
\begin{abstract}
For a finite set of homogeneous locally nilpotent derivations of the algebra of polynomials in several variables, a finite dimensionality criterion for the Lie algebra generated by these derivations is known. Also the structure of the corresponding finite-dimensional Lie algebras is described in previous works. In this paper, we obtain a  finite dimensionality criterion for a Lie algebra generated by a finite set of homogeneous derivations, each of which is not locally nilpotent.
\end{abstract}
\section{Introduction}

Let $\mathbb{K}$ be a field of characteristic zero. Recall that a derivation of a $\mathbb{K}$-algebra $A$ is a linear operator $D\colon A\to A$ satisfying the Leibniz rule
$D(fg)=D(f)g+fD(g)$ for all $f,g\in A$.

Let us consider the algebra of polynomials $\mathbb{K}[x_1,\ldots,x_n]$. It is easy to show that each derivation $D$ of this algebra can be written as 
\[
D=f_1\partial_1+f_2\partial_2+\ldots+f_n\partial_n,
\]
where $\partial_i:=\frac{\partial}{\partial x_i}$ is the partial derivative with respect to the variable $x_i$ and $f_1,f_2,\ldots,f_n$ are arbitrary polynomials.

It is well known that the set of all derivations of the algebra $\mathbb{K}[x_1,\ldots,x_n]$ form a Lie algebra with respect to the commutator $[D_1,D_2]:=D_1\circ D_2-D_2\circ D_1$. This algebra is called the Lie algebra of \textit{Cartan type $W$} (see~\cite{Ca}) and is denoted by~$W_n$. This algebra also can be interpreted as the Lie algebra of polynomial vector fields on the affine space $\mathbb{A}^n$.

The Lie algebra $W_n$ is simple. This result allows an essential generalization known as Siebert's theorem: the Lie algebra of polynomial vector fields on an affine variety $X$ is simple if and only if the variety $X$ is smooth~\cite{Sie}, see also~\cite{BF}.

The Lie algebra $W_n$ and its subalgebras have been studied in many works. In~\cite{Ru} subalgebras of minimal codimension in $W_n$ are described. It is also observed there that the automorphism group of the Lie algebra $W_n$ is naturally isomorphic to the automorphism group of the affine space~$\mathbb{A}^n$. Detailed proofs of these results can be found in \cite{Ba-2,KR}. Note that the famous Jacobian Conjecture is equivalent to the fact that every injective endomorphism of the Lie algebra $W_n$ is an automorphism, see~\cite{Ku,KR}.

By Ado's theorem, every finite-dimensional Lie algebra is embeddable into the matrix Lie algebra $\mathfrak{gl}_n$ for some positive integer $n$. It follows that every finite-dimensional Lie algebra is realized as a subalgebra in some Lie algebra $W_n$. Moreover, over an algebraically closed field and over real numbers it is possible to construct a so-called transitive embedding, that is an embedding such that the sections of the corresponding vector fields at the origin fill the entire tangent space, see~\cite{Go,Dr,Gr}.

In ~\cite{BL} the authors study polynomial Lie algebras. They include subalgebras in $W_n$ that are free $\mathbb{K}[x_1,\ldots,x_n]$-submodules of rank~$r$ with respect to the action of $ \mathbb{K}[x_1,\ldots,x_n]$ on $W_n$ by multiplications. In the paper~\cite{AMP}, finite-dimensional subalgebras in polynomial Lie algebras of rank $1$ are found.

The aim of this paper is to describe another natural class of finite-dimensional subalgebras of the Lie algebra $W_n$. Consider the lattice $\mathbb{Z}^n$ and the standard basis $e_1,\ldots e_n$ in~$\mathbb{Z}^n$. Recall that a $\mathbb{Z}^n$-grading on an algebra $A$ is a decomposition of $A$ into a direct sum of subspaces
\[
A=\bigoplus_{u\in\mathbb{Z}^n} A_u,
\]
where $A_vA_u\subseteq A_{v+u}$ for any $v,u\in\mathbb{Z}^n$. We call the subspace $A_u$ a \emph{homogeneous component} of degree $u$, an element $f\in A_u$ is a homogeneous element of degree $u$, and we use the notation $\deg(f)=u$. A subalgebra $B\subseteq A$ is called \emph{homogeneous} if $B=\oplus_{u\in\mathbb{Z}^n} (A_u\cap B)$ or, equivalently, $B$ is generated by homogeneous elements.

The algebra $\mathbb{K}[x_1,\ldots,x_n]$ has a natural $\mathbb{Z}^n$-grading given by the condition $\deg(x_i)=e_i$. We call this grading \emph{fine}. With respect to this grading, 
nonzero homogeneous components are exactly those components whose degree coordinates are non-negative. Such components are one-dimensional and spanned by monomials $x^a:=x_1^{a_1}\ldots x_n^{a_n}$. It follows that
\[
\deg(x_1^{a_1}\ldots x_n^{a_n})=(a_1,\ldots,a_n)\in\mathbb{Z}^n_{\ge 0}.
\]

Next we recall that a derivation $D$ of an algebra $A$ is {\it locally nilpotent} if for any $f\in A$ there exists a positive integer $s$ such that ${D^s(f)=0}$.
Let us assume that the derivation $D$ of the algebra $\mathbb{K}[x_1,\ldots,x_n]$ {\it is homogeneous}, i.e., $D$ maps $\mathbb{Z}^n$-homogeneous polynomials to $\mathbb{Z }^n$-homogeneous polynomials. As we already observed, for the fine $\mathbb{Z}^n$-grading homogeneous polynomials are precisely monomials.

By Leibniz's rule, every nonzero homogeneous derivation $D$ has well-defined {\it degree}, that is an element $e$ of the lattice $\mathbb{Z}^n$ such that for any $a\in\mathbb {Z}^n$ the operator $D$ maps a homogeneous polynomial of degree $a$ to a homogeneous polynomial of degree $a+e$.

It is easy to see that every homogeneous derivation $D$ of the algebra $\mathbb{K}[x_1,\ldots,x_n]$ is proportional to a derivation of exactly one of two types. These are homogeneous locally nilpotent derivations (type I):
$$
\nabla^a_i:= x_1^{a_1}\ldots x_{i-1}^{a_{i-1}}x_{i+1}^{a_{i+1}}\ldots x_n^{a_n}\partial_i
$$
and homogeneous derivations that are not locally nilpotent (type II):
$$
\Delta^p_{\beta}:=x_1^{p_1}\ldots x_n^{p_n} \sum_{j=1}^n \beta_jx_j\partial_j.
$$
In these notations, the parameter $a$ of the operator $\nabla^a_i$ is a vector in $\mathbb{Z}^n$ with all coordinates $a_s$ non-negative and the $i$th coordinate zero. We call $a$ {\it the vector of exponents} of the operator~$\nabla^a_i$. The parameter $p$ of the operator $\Delta^p_{\beta}$ is a vector in $\mathbb{Z}^n$ with non-negative coordinates $p_s$, and the parameter $\beta$ is a nonzero vector $(\beta_1,\ldots,\beta_n)$ in the space $\mathbb{K}^n$. Note that the degree of the homogeneous derivation $\nabla^a_i$ is $a-e_i$, and the degree of the homogeneous derivation $\Delta^p_{\beta}$ is $p$. 

We denote by $W^a_i$ the line $\langle\nabla^a_i\rangle$ and by $W^p$ the linear span of operators $\Delta^p_{\beta}$ for all possible~$\beta$. 
There is a decomposition
$$
W_n=\bigoplus_{a,i} W^a_i \, \oplus \, \bigoplus_p W^p.
$$ 
It is easy to see that this is a $\mathbb{Z}^n$-grading on the Lie algebra $W_n$ such that nonzero homogeneous components have degrees with at most one negative coordinate, and this negative coordinate is $-1$. Moreover, the components $W^a_i$ are one-dimensional, and the components $W^p$ have dimension~$n$. 

Along with the degree of a homogeneous derivation, it is useful to consider its weight. For derivations of type~I, we define the {\it weight} as 
$\omega(\nabla^a_i):=\sum_j a_j-1$, and for type~II we set $\omega(\Delta^p_{\beta}):=\sum_j p_j$. Thus, homogeneous derivations of weight $-1$ are exactly partial derivatives $\partial_i=\nabla^0_i$. Next, derivations of zero weight are $x_j\partial_i=\nabla^{e_j}_i$, $i\ne j$, and $\sum_j\beta_jx_j\partial_j=\Delta^0_{\beta}$. All other homogeneous derivations have positive weights.

It is easy to see that the weight defines a $\mathbb{Z}$-grading on the Lie algebra $W_n$. This is a factor-grading of the fine $\mathbb{Z}^n$-grading with respect to the projection
$\mathbb{Z}^n\to\mathbb{Z}$ mapping a lattice vector to the sum of its coordinates. 

In Section~\ref{s1-1} we consider derivations of the ring of Laurent polynomials which are homogeneous with respect to the fine grading on this ring. This provides a unified description of derivations of types~I and~II and leads to a convenient formula for the Lie bracket of such derivations.

The main aim of this paper is to describe finite-dimensional subalgebras of the Lie algebra $W_n$ that are homogeneous with respect to the fine $\mathbb{Z}^n$-grading. Every such subalgebra is generated by a finite set of homogeneous derivations. So the problem reduces to obtaining a finite dimensionality criterion for the Lie algebra generated by a finite set of derivations of the form $\nabla^a_i$ and~$\Delta^p_{\beta}$.

A finite dimensionality criterion for the Lie algebra generated by a finite set of derivations of type~I is obtained in~\cite{ALS} and~\cite{AZ}. For convenience of the reader, in Section~\ref{s2} we give a short proof of this criterion, which is valid over any field of characteristic zero.  

In Sections~\ref{s3}-\ref{s3-1} we obtain a finite dimensionality criterion for the Lie algebra generated by a finite set of derivations of type~II. All such Lie algebras are solvable, and if we assume that all generators have positive weights, they are also nilpotent. Some structural results and examples of finite-dimensional Lie algebras generated by derivations of type~II are given in Section~\ref{s3-2}.  

It is worth noting that in each case the finite dimensionality criterion is related to an acyclicity condition of a finite directed graph that is built on a finite set of homogeneous derivations. 

In the last section we discuss possible applications of the results obtained above and formulate problems for further research.

\section{Derivations of the Laurent polynomial ring}\label{s1-1}

In this section, we briefly review some facts on derivations of the Laurent polynomial ring $\mathbb{K}[x_1,x_1^{-1},\ldots,x_n,x_n^{-1}].$ We also discuss the relation of such derivations to our problem, and obtain formulas for Lie brackets we frequently use later. 

Similar to the case of the ring of polynomials, it is easy to show that every derivation $D$ of the ring of Laurent polynomials is of the form
$$
D=f_1\partial_1+f_2\partial_2+\ldots+f_n\partial_n,
$$
where $\partial_i:=\frac{\partial}{\partial x_i}$ is the partial derivative and $f_1,f_2,\ldots,f_n$ are arbitrary Laurent polynomials. 

The Lie algebra of derivations of the Laurent polynomial algebra is called the {\it Witt algebra}. We denote this Lie algebra by $\mathcal{W}_n$. It is a simple infinite-dimensional Lie algebra. Every derivation of the algebra $\mathbb{K}[x_1,\ldots,x_n]$ extends uniquely to a derivation of the Laurent polynomial algebra. This gives an embedding of the Lie algebras $W_n\subseteq\mathcal{W}_n$. 

In~\cite{Ba-1} it is proved that the automorphism group of the Witt algebra $\mathcal{W}_n$ is isomorphic to the automorphism group of the Laurent polynomial algebra $\mathbb{K}[x_1, x_1^{-1},\ldots,x_n,x_n^{-1}]$, and this group is a semi-direct product of the group $\mathrm{GL}_n(\mathbb{Z})$ and the algebraic torus $T^n$. In~\cite{Ch} a classification of homogeneous subalgebras of the Lie algebra of differential operators (of all orders) on the algebra $\mathbb{K}[x_1,x_1^{-1}]$ is obtained. 

It is well known that a locally nilpotent derivation annihilates any invertible element; see ~\cite[Principle~1~(b)]{Fr}. Since the ring of Laurent polynomials is generated by invertible elements, we conclude that this ring admits no nonzero locally nilpotent derivation. 

The fine $\mathbb{Z}^n$-grading on the algebra $\mathbb{K}[x_1,\ldots,x_n]$ extends to a fine $\mathbb{Z}^n$-grading on the algebra of Laurent polynomials:
$$
\mathbb{K}[x_1,x_1^{-1}\ldots,x_n,x_n^{-1}]=\bigoplus_{(a_1,\ldots,a_n)\in\mathbb{Z}^n} \mathbb{K} x_1^{a_1}\ldots x_n^{a_n}. 
$$ 
Next, homogeneous derivations from the Lie algebra $W_n$ can be extended uniquely to homogeneous derivations from the Lie algebra $\mathcal{W}_n$. Each homogeneous derivation from the Lie algebra $\mathcal{W}_n$ has the form
$$
D^c_{\alpha}:=x^c\sum_{j=1}^n \alpha_jx_j\partial_j, \quad \text{where} \quad c\in\mathbb{Z}^n, \\ \alpha\in\mathbb{K}^n. 
$$
Let $\epsilon_1,\ldots,\epsilon_n$ be the standard basis in $\mathbb{K}^n$. Then the extension of the derivation $\nabla^a_i$ to the Laurent polynomial algebra is nothing but the derivation $D^{a-e_i}_{\epsilon_i}$. The corresponding extension of the derivation $\Delta^p_{\beta}$ is the derivation $D^p_{\beta}$.  

Consider the map $\mathbb{K}^n\times\mathbb{Z}^n\to\mathbb{K}$, $(\alpha,u)\mapsto\langle\alpha,u\rangle:=\sum_{i=1}^n \alpha_i u_i$. It is easy to see that
$$
(x^c\sum_i\alpha_ix_i\partial_i)(x^u)=\langle\alpha,u\rangle x^{c+u}.
$$

\begin{lemma} \label{luse}
For any $c,d\in\mathbb{Z}^n$ and $\alpha,\beta\in\mathbb{K}^n$, we have
$$
[D^c_{\alpha},D^d_{\beta}]=D^{c+d}_{\langle\alpha,d\rangle\beta-\langle\beta,c\rangle\alpha}.
$$
\end{lemma}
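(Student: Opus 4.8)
The plan is to verify the identity by evaluating both sides on the monomial basis. Since the Laurent monomials $x^u$, $u\in\mathbb{Z}^n$, form a $\mathbb{K}$-basis of $\mathbb{K}[x_1,x_1^{-1},\ldots,x_n,x_n^{-1}]$, and both $[D^c_{\alpha},D^d_{\beta}]$ and $D^{c+d}_{\langle\alpha,d\rangle\beta-\langle\beta,c\rangle\alpha}$ are $\mathbb{K}$-linear operators on this algebra, it suffices to check that they agree on each $x^u$. The only input needed is the action formula $D^c_{\alpha}(x^u)=\langle\alpha,u\rangle x^{c+u}$ displayed just before the statement.

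First I would compute the two compositions separately. Applying the action formula twice gives $D^c_{\alpha}D^d_{\beta}(x^u)=\langle\beta,u\rangle\,\langle\alpha,d+u\rangle\,x^{c+d+u}$ and, symmetrically, $D^d_{\beta}D^c_{\alpha}(x^u)=\langle\alpha,u\rangle\,\langle\beta,c+u\rangle\,x^{c+d+u}$. Both compositions produce the same monomial $x^{c+d+u}$, so the commutator takes the form $[D^c_{\alpha},D^d_{\beta}](x^u)=\bigl(\langle\beta,u\rangle\langle\alpha,d+u\rangle-\langle\alpha,u\rangle\langle\beta,c+u\rangle\bigr)x^{c+d+u}$.

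The one point that requires care — and is really the heart of the computation — is the simplification of the scalar coefficient. Using that the pairing $\langle\cdot,\cdot\rangle$ is additive in its second argument, I would write $\langle\alpha,d+u\rangle=\langle\alpha,d\rangle+\langle\alpha,u\rangle$ and $\langle\beta,c+u\rangle=\langle\beta,c\rangle+\langle\beta,u\rangle$. Substituting, the two symmetric cross terms $\langle\alpha,u\rangle\langle\beta,u\rangle$ cancel, leaving $\langle\alpha,d\rangle\langle\beta,u\rangle-\langle\beta,c\rangle\langle\alpha,u\rangle$. Finally, using linearity of the pairing in its first argument, this equals $\langle\langle\alpha,d\rangle\beta-\langle\beta,c\rangle\alpha,\,u\rangle$, which is exactly the coefficient produced by $D^{c+d}_{\langle\alpha,d\rangle\beta-\langle\beta,c\rangle\alpha}$ acting on $x^u$. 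Since $u$ was arbitrary, the two operators coincide. I do not expect any genuine obstacle here: the result is a direct bilinearity computation, with the cancellation of the cross terms being the only step worth highlighting.
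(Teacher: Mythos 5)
Your proof is correct and follows essentially the same route as the paper: a direct computation of both compositions using the action formula $D^c_{\alpha}(x^u)=\langle\alpha,u\rangle x^{c+u}$, with the cancellation of the cross terms $\langle\alpha,u\rangle\langle\beta,u\rangle$ doing the work. The only (harmless) difference is that you verify the identity on every Laurent monomial $x^u$, whereas the paper checks it only on the generators $x_i$, implicitly using that a derivation is determined by its values on generators.
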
 

\begin{proof}
It suffices to check that the left and right parts of the equality act the same way on the generators $x_i$. We have
$$
(D^c_{\alpha}D^d_{\beta}-D^d_{\beta}D^c_{\alpha})(x_i)=D^c_{\alpha}(x^d\beta_i x_i)-D^d_{\beta}(x^c\alpha_i x_i)=\beta_i D^c_{\alpha}(x^{d+e_i})-\alpha_i D^d_{\beta}(x^{c+e_i})=
$$
$$
=\beta_i\langle\alpha,d+e_i\rangle x^cx^{d+e_i}-\alpha_i\langle\beta,c+e_i\rangle x^dx^{c+e_i}=(\beta_i\langle\alpha,d\rangle-\alpha_i\langle\beta,c\rangle)x^{c+d}x_i=
$$
$$
=D^{c+d}_{\langle\alpha,d\rangle\beta-\langle\beta,c\rangle\alpha}(x_i).
$$
This completes the proof.
\end{proof}

\section{Derivations of type I}
\label{s2}

A finite dimensionality criterion for the Lie algebra $\mathfrak{g}(\mathbb{D})$ generated by a set 
$$
\mathbb{D}=\left\{\nabla^{a(1)}_{i(1)},\ldots,\nabla^{a(m)}_{i(m)}\right\}
$$ 
of homogeneous locally nilpotent derivations can be found in~\cite[Theorem~5.1]{ALS}, see also \cite[Proposition~4.8]{AZ}. The results of these papers are obtained in a broader geometric context, and the ground field $\mathbb{K}$ is additionally assumed to be algebraically closed. But is turns out that the proof of the criterion we are interested in works over any field of characteristic zero. We give here a short proof of the criterion that is a combination of ideas from~\cite{ALS} and~\cite{AZ}. 

We associate with the set $\mathbb{D}$ a finite directed graph $\Gamma(\mathbb{D})$ with vertices $1,\ldots, m$. A pair $(s,j)$ is an edge in $\Gamma(\mathbb{D})$ if and only if the $i(s)$-th coordinate of the vector $a(j)$ is positive.

\begin{theorem}
\label{t1}
The Lie algebra $\mathfrak{g}(\mathbb{D})$ generated by a finite set $\mathbb{D}$ of derivations of type I is finite-dimensional if and only if either there are no oriented cycles in the graph $\Gamma(\mathbb{D})$, or the vertices of the graph $\Gamma(\mathbb{D})$ contained in oriented cycles correspond to zero-weight derivations.
\end{theorem}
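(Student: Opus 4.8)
The plan is to pass to the Witt algebra $\mathcal{W}_n$ and exploit Lemma~\ref{luse}. Writing $\nabla^a_i = D^{a-e_i}_{\epsilon_i}$, the lemma gives, for $\nabla^a_i,\nabla^b_j$ with $i\ne j$,
\[
[\nabla^a_i,\nabla^b_j]=D^{a+b-e_i-e_j}_{\,b_i\epsilon_j-a_j\epsilon_i},
\]
and $0$ when $i=j$; here the coefficient $b_i$ of $\epsilon_j$ is nonzero exactly when $(s,t)$ is an edge of $\Gamma(\mathbb{D})$, and $a_j$ is nonzero exactly when $(t,s)$ is. I would then work throughout with the fine $\mathbb{Z}^n$-grading: $\mathfrak{g}(\mathbb{D})$ is a homogeneous subalgebra, every homogeneous component of $W_n$ is finite-dimensional (dimension $1$ or $n$), and a bracket using the generator $\nabla^{a(s)}_{i(s)}$ with multiplicity $n_s$ is homogeneous of fine degree $\sum_s n_s\big(a(s)-e_{i(s)}\big)$ and weight $\sum_s n_s\,\omega(\nabla^{a(s)}_{i(s)})$. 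Since the weight equals the sum of the coordinates of the fine degree and every nonzero component of $W_n$ has all coordinates $\ge -1$, bounded weight forces bounded coordinates; hence $\mathfrak{g}(\mathbb{D})$ is finite-dimensional if and only if only finitely many fine degrees occur, equivalently the weight is bounded on the degrees that actually occur.

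For the sufficiency direction (no oriented cycle meets a positive-weight vertex $\Rightarrow$ finite dimension) I would argue the contrapositive: an unbounded family of occurring degrees yields a nonzero \emph{recession solution} $d\in\mathbb{Z}_{\ge 0}^{m}$, namely $\sum_s d_s\,(a(s))_k\ge\sum_{s:\,i(s)=k}d_s$ for every variable $x_k$ (the supply of $x_k$ dominates the total order of differentiation in $x_k$), together with positive total weight $\sum_s d_s\,\omega(\nabla^{a(s)}_{i(s)})>0$. In the subgraph of $\Gamma(\mathbb{D})$ induced on $\{s:d_s>0\}$ every vertex $s$ has out-degree $\ge 1$, because differentiating $x_{i(s)}$ demands a supplier of $x_{i(s)}$, i.e.\ an edge $s\to t$ into the support; hence this subgraph contains an oriented cycle. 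Choosing $d$ of minimal support, every support vertex lies on a cycle; and since a partial derivative $\nabla^0_i$ has no incoming edges ($a=0$), no cycle can contain a weight-$(-1)$ vertex. Thus all cycle vertices have weight $\ge 0$, and positivity of the total weight forces a weight-$\ge 1$ vertex onto a cycle, contradicting the hypothesis. The quantitative input is that a weight-$0$ cycle is \emph{tight} (each of its derivations produces exactly as many variable-units as it consumes), so it carries no surplus to sustain an external positive-weight drain.

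For the necessity direction (a cycle through a positive-weight vertex $\Rightarrow$ infinite dimension) I would fix a simple such cycle $s_1\to\cdots\to s_r\to s_1$ and iterate the operators $\mathrm{ad}\,\nabla^{a(s_l)}_{i(s_l)}$ around it. By Lemma~\ref{luse} every left-normed bracket remains a \emph{single} homogeneous element $D^{C}_{V}$ of $\mathcal{W}_n$: the degree $C$ accumulates the vectors $a(s_l)-e_{i(s_l)}$, so each full lap raises the weight by $\sum_l \omega(\nabla^{a(s_l)}_{i(s_l)})\ge 1$, while the vector part evolves by the explicit rule $V\mapsto C_{i(s)}\,V-\langle V,\,a(s)-e_{i(s)}\rangle\,\epsilon_{i(s)}$, where $C$ is the current accumulated degree. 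Producing elements of unbounded weight lying in pairwise distinct homogeneous components then yields infinitely many linearly independent elements of $\mathfrak{g}(\mathbb{D})$.

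The main obstacle is the non-vanishing in the necessity direction: I must rule out that the vector part $V\in\mathbb{K}^n$ collapses to $0$ after traversing the cycle, since already the bracket of two type-I derivations is a combination of two terms. I expect to handle this by choosing the cycle economically and tracking a single coordinate of $V$, or a fixed linear functional $\langle V,\cdot\rangle$, and showing by an extremality or sign argument that it stays nonzero lap after lap. The secondary delicate point is the minimal-support reduction in the sufficiency direction that upgrades ``the support contains a cycle'' to ``a positive-weight vertex lies on a cycle''.
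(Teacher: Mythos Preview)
Your approach is genuinely different from the paper's, but both directions contain real gaps that you flag yourself and do not close.

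\textbf{Necessity.} Iterating $\mathrm{ad}$ around a fixed cycle and hoping the vector part $V$ survives is exactly where the difficulty lies, and ``tracking a single coordinate'' or an ``extremality/sign argument'' is not enough: already for a $2$-cycle the bracket produces a genuine two-term combination $b_i\epsilon_j-a_j\epsilon_i$, and over an arbitrary field of characteristic zero there is no positivity to exploit. The paper sidesteps this entirely. It first reduces any positive-weight cycle of length $\ge 3$ to a shorter one by replacing two consecutive vertices $\nabla^{a(l-1)}_{i(l-1)},\nabla^{a(l)}_{i(l)}$ with their commutator, which is again of type~I and keeps the total cycle weight. Once down to a $2$-cycle, say with $i(1)=1$, $i(2)=2$, $a(1)_2=c\ge 1$, $a(2)_1=d\ge 1$, it does \emph{not} alternate: it applies $(\mathrm{ad}\,\nabla^{a(1)}_1)^{d+1}$ to $\nabla^{a(2)}_2$, which yields the explicit element $-c(d+1)!\,\nabla^{b}_1$ --- a \emph{pure} type-I derivation with strictly larger weight and with $b_2\ge c$. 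So the pair $(\nabla^{b}_1,\nabla^{a(2)}_2)$ is again a positive-weight $2$-cycle and the process repeats. The non-vanishing is thus an explicit factorial, not a delicate tracking argument.

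\textbf{Sufficiency.} Your polyhedral/recession idea is attractive, but the step ``minimal support $\Rightarrow$ every support vertex lies on a cycle'' is not justified, and your final sentence shows you know it. Out-degree $\ge 1$ on the support gives a cycle, but tail vertices pointing into that cycle can perfectly well sit in a minimal-support positive-weight solution; one must really use the ``tightness'' of weight-$0$ cycles in the inequalities to force tail coefficients to zero, and you have not carried this out. The paper takes a completely different, structural route: if $\Gamma(\mathbb{D})$ is acyclic it builds an auxiliary graph on $\{1,\dots,n\}$ to renumber the variables so that every $\nabla^{a}_i\in\mathbb{D}$ becomes \emph{triangular}, and then invokes the fact (Proposition~\ref{inner_prop}) that finitely many triangular derivations generate a finite-dimensional nilpotent algebra. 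In the mixed case it splits $\mathbb{D}=\mathbb{D}'\cup\mathbb{D}''$ (cycle vertices vs.\ the rest), proves a small lemma that $[\mathbb{D}',\mathbb{D}'']$ stays on the $\mathbb{D}''$ side without creating new cycles, and obtains $\mathfrak{g}(\mathbb{D})=\mathfrak{g}(\mathbb{D}')\rightthreetimes\mathfrak{g}(\mathbb{D}''')$ as a semidirect product of a finite-dimensional weight-$0$ algebra with a finite-dimensional nilpotent one.

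In short: your degree/recession framework is a reasonable alternative viewpoint, but as written it is a plan with two acknowledged holes; the paper closes both by concrete constructions (cycle shortening plus the $(\mathrm{ad})^{d+1}$ trick, and triangularisation plus a semidirect-product decomposition) rather than by the asymptotic/linear-programming reasoning you propose.
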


\begin{proof}
Note that vertices corresponding to derivations of negative weight cannot be contained in oriented cycles. Suppose the graph $\Gamma(\mathbb{D})$ has an oriented cycle containing a vertex corresponding to a derivation of positive weight. Let us prove that in this case the Lie algebra $\mathfrak{g}(\mathbb{D})$ is infinitely dimensional.

{\it Case 1.1.}\ Let us consider the case of a cycle of length two. Assume that it corresponds to the derivations $\nabla^{a(1)}_{i(1)}$ and $\nabla^{a(2)}_{i(2)}$. For the sake of clarity, let $i(1)=1$, $\omega(\nabla^{a(1)}_1)>0$ and $i(2)=2$. Set $$
a(1)_2=c\ge 1, \quad a(2)_1=d\ge 1. 
$$
It is easy to deduce from Lemma~\ref{luse} that
$$
(\mathrm{ad}\,\nabla^{a(1)}_1)^{d+1}(\nabla^{a(2)}_2)=-c(d+1)!\nabla^b_1, \quad \text{where}\quad b_2=c(d+1)-1\ge c,
$$
and
$$
\omega(\nabla^b_1)=(d+1)\omega(\nabla^{a(1)}_1)+\omega(\nabla^{a(2)}_2)>\omega(\nabla^ {a(1)}_1).
$$
Thus, we can proceed from considering the pair $\left(\nabla^{a(1)}_1, \nabla^{a(2)}_2\right)$ to the pair $\left(\nabla^b_1, \nabla^{a(2)}_2\right)$. Continuing this process, we obtain infinitely many linearly independent commutators. This contradicts the finite dimensionality of the Lie algebra $\mathfrak{g}(\mathbb{D})$.

\smallskip

{\it Case 1.2.}\ Let $\mathbb{X}=\left\{\nabla^{a(1)}_{i(1)},\ldots,\nabla^{a(l)}_ {i(l)}\right\}$ be a sequence of derivations from the set $\mathbb{D}$ whose sum of weights is positive and which corresponds to vertices of an oriented cycle of minimal length among cycles with a positive sum of weights. Let us assume that $l\ge 3$.

Let $i(l-2)=s, i(l-1)=i$ and $i(l)=j$. Then
$$
a(l)_i:=c>0, \ a(l-1)_s>0, \ a(1)_j>0 \quad \text{and} \quad a(l-2)_i=a(l -1)_j=0.
$$
In particular, we have $i\ne s\ne j$. We obtain
$$
\left[\nabla^{a(l-1)}_{i(l-1)},\nabla^{a(l)}_{i(l)}\right]=c\nabla^b_j, \quad \text{where} \quad b_i=c-1 \quad \text{and} \quad b_s=a(l-1)_s+a(l)_s>0.
$$
Let us proceed from the set $\mathbb{D}$ to the set $\mathbb{D}'=\mathbb{D}\cup\{\nabla^b_j\}$. This does not change the Lie algebra $\mathfrak{g}(\mathbb{D})$ generated by the set.

Replacing $\mathbb{X}=\left\{\nabla^{a(1)}_{i(1)},\ldots,\nabla^{a(l)}_{i(l)}\right \}$ by $\mathbb{X}'=\left\{\nabla^{a(1)}_{i(1)},\ldots, \nabla^{a(l-2)}_{i (l-2)},\nabla^b_j\right\}$, we get a shorter cycle whose sum of weights is the same as that of the elements of~$\mathbb{X}$. In a finite number of steps we arrive at Case~1.1 and obtain a contradiction with the assumption that the Lie algebra~$\mathfrak{g}(\mathbb{D})$ is finite dimensional.

\smallskip

Now we come to the proof of the inverse implication in Theorem~\ref{t1}.

\smallskip

{\it Case 2.1.}\ Suppose that the graph $\Gamma(\mathbb{D})$ has no oriented cycle. Let us show that in this case the Lie algebra $\mathfrak{g}(\mathbb{D})$ is finite-dimensional and nilpotent.

Consider a graph $\mathcal{T}(\mathbb{D})$ whose vertices are numbered from $1$ to~$n$, and $(s,j)$ is an edge in $\mathcal{T}(\mathbb{D})$ if and only if there is a derivation $\nabla^a_j$ in the set $\mathbb {D}$ with $a_s>0$. With each oriented cycle $\mathcal{C}$ in the graph $\mathcal{T}(\mathbb{D})$ one associates an oriented cycle $\mathcal{C}'$ in the graph $\Gamma(\mathbb{D})$. Namely, let us choose an edge in the cycle $\mathcal{C}$, say $(s,j)$, and consider the corresponding pair of derivations $(\nabla^b_s,\nabla^a_j)$ with $a_s>0$. Let us associate with the next edge of the cycle $\mathcal{C}$ a pair with the same condition such that the first element of the pair is the derivation $\nabla^a_j$, and so on. Due to the finiteness of the set $\mathbb{D}$, we  eventually reach the derivation that was encountered earlier. Thus we obtain a oriented cycle $\mathcal{C}'$ in the graph $\Gamma(\mathbb{ D})$. Since by assumption there is no oriented cycle in the graph $\Gamma(\mathbb{D})$, there is no such cycle in the graph $\mathcal{T}(\mathbb{D})$ as well. Let us renumber the variables $x_1,\ldots, x_n$ so that for each edge $(s,j)$ in the graph $\mathcal{T}(\mathbb{D})$ we have $j>s$.

Recall that a derivation $D$ of the algebra $\mathbb{K}[x_1,\ldots,x_n]$ is \emph{triangular} if $D(x_k)\in\mathbb{K}[x_1,\ldots,x_ {k-1}]$ for all $2\le k\le n$ and $D(x_1)\in\mathbb{K}$.
In our situation, all derivations from $\mathbb{D}$ are triangular.

The statement we need follows from a more general result proved, for example, in \cite[Corollary~2]{ASh} and~\cite[Proposition~15.2.5]{FK}.

\begin{proposition}\label{inner_prop}
A finite set of triangular derivations generates a finite-dimensional nilpotent Lie algebra.
\end{proposition}

For convenience of the reader, we give a short proof of this statement in the case we are interested in, of a finite set of homogeneous triangular derivations.

\begin{proof}[Proof of Proposition \ref{inner_prop}]
Let $\nabla^a_i$ and $\nabla^b_j$ be homogeneous triangular derivations with $j>i$. Then
$$
[\nabla^a_i, \nabla^b_j]=b_i\nabla^f_j \quad \text{and} \quad f_i=b_i-1.
$$
Taking into account the upper triangularity, we conclude that if such a commutator is nonzero then its vector of exponents viewed from right to left is lexicographically smaller than the vector of exponents $b$. Thus, when commuting two homogeneous triangular derivations, we obtain either zero or a homogeneous triangular derivation with lower index equal to the maximum of the lower indices of the commutated elements, and the vector of exponents of the commutator is lexicographically smaller than the vector of exponents of the commutated element with the same low index. This shows that any sequence of multiple commutators of elements of our finite set vanishes in a finite number of steps. This implies that the corresponding Lie algebra is finite dimensional, and Engel's theorem shows that it is nilpotent. This completes the proof.
\end{proof}

{\it Case 2.2.}\ Let $\mathbb{D}=\mathbb{D}'\cup\mathbb{D}''$, where $\mathbb{D}'$ are those derivations for which the corresponding vertices of the graph $\Gamma(\mathbb{D})$ are contained in oriented cycles, and $\mathbb{D}''$~ are all other derivations. Let us assume that all derivations in $\mathbb{D}'$ have zero weight and show that the Lie algebra $\mathfrak{g}(\mathbb{D})$ is finite dimensional.

\begin{lemma} \label{ld}
Let $D_1\in\mathbb{D}'$, $D_2\in\mathbb{D}''$ and $D:=[D_1,D_2]$. Then either $D=0$ or $D$ is a homogeneous derivation of type~I and $D$ is not contained in any oriented cycle for the set $\mathbb{D}\cup\{ D\}$.
\end{lemma}

\begin{proof}
Let $D\ne 0$. Since the derivations $D_1$ and $D_2$ do not form a cycle of length two in the graph $\Gamma(\mathbb{D})$, there are exactly two cases left up to renumbering of variables.

{\it Case 1.}\ Let $D_1=x_1\partial_2$ and $D_2=\nabla^a_1$ with $a_2=0$. Then $D=-\nabla^a_2$. If $D$ is contained in some cycle, then replacing $D$ in this cycle by the pair $(D_2, D_1)$ gives a cycle in $\mathbb{D}$ that contains $D_2$. This leads to a contradiction.

{\it Case 2.}\ Let $D_1=x_1\partial_2$ and $D_2=\nabla^a_3$ with $a_2=c>0$. Then $D=c\nabla^b_3$, where $b_1=a_1+1$, $b_2=c-1$ and $b_s=a_s$ for $s>3$. If $D$ is contained in a cycle, then replacing $D$ in this cycle either by $D_2$ or by the pair $(D_1, D_2)$ (the latter may only be necessary if $a_1=0$) we obtain a cycle in $ \mathbb{D}$ that contains $D_2$. This is a contradiction.
\end{proof}

Thus, by adding commutators of the form $[D_1,D_2]$, we only increase the subset $\mathbb{D}''$. Since $D_1$ has zero weight, the weight of $[D_1,D_2]$ is equal to the weight of $D_2$. It follows that after adding a finite number of such commutators, we obtain a subset of $\mathbb{D}'''$ that is invariant under taking commutators with elements of $\mathbb{D}''$. Then the finite-dimensional nilpotent Lie algebra $\mathfrak{g}(\mathbb{D}'''$) generated by the subset $\mathbb{D}'''$ is invariant under such a commutation. The Lie algebra $\mathfrak{g}(\mathbb{D}')$ is generated by derivations of zero weight and is therefore finite dimensional. Thus, the Lie algebra $\mathfrak{g}(\mathbb{D})$ is the semidirect product $\mathfrak{g}(\mathbb{D}') \rightthreetimes \mathfrak{g}(\mathbb{D}''')$ of two finite-dimensional Lie algebras. 

\smallskip

This completes the proof of Theorem~\ref{t1}.
\end{proof}

\begin{remark} \label{r1}
Let the Lie algebra $\mathfrak{g}(\mathbb{D})$ be finite dimensional. It is easy to show that subsets of derivations from $\mathbb{D}'$ corresponding to maximal oriented cycles give rise to the special linear Lie algebras $\sl(r_i)$; see ~\cite[Proposition~4.4]{ALS}. The Lie algebra $\mathfrak{g}(\mathbb{D}')$ is a direct sum of such subalgebras. In other words, it is a Lie algebra of type~A. The Lie algebra $\mathfrak{g}(\mathbb{D}')$ is a semisimple part of the Lie algebra~$\mathfrak{g}(\mathbb{D})$. Finally, the algebra $\mathfrak{g}(\mathbb{D}''')$ is the nilpotent radical of the Lie algebra~$\mathfrak{g}(\mathbb{D})$. 
\end{remark}

It should be noted that the nilpotent radical $\mathfrak{g}(\mathbb{D}''')$ is a linear span of homogeneous locally nilpotent derivations. Consequently, it does not contain derivations of type~II. In contrast, derivations of type~II form bases in Cartan subalgebras of the Lie algebras $\mathfrak{sl}(r_i)$. In particular, derivations of type~II generate a commutative Lie subalgebra in $\mathfrak{g}(\mathbb{D})$. 

\section{Derivations of type II}
\label{s3}
Now we consider the problem of finite dimensionality of the Lie algebra $\mathfrak{g}(\mathcal{D})$ generated by a finite set $\mathcal{D}$ of homogeneous derivations 
$\Delta^{p(1)}_{\beta(1)},\ldots,\Delta^{p(k)}_{\beta(k)}$. 

Let $\beta\in\mathbb{K}^n$ and $p\in\mathbb{Z}^n$. Recall that $\langle\beta,p\rangle:=\beta_1p_1+\ldots+\beta_np_n$.  Let us reformulate Lemma~\ref{luse} to apply it to our situation.

\begin{lemma} \label{l1}
For any two homogeneous derivations of type II we have
$$
[\Delta^p_{\beta},\Delta^q_{\gamma}]=\Delta^{p+q}_{\langle\beta,q\rangle\gamma-\langle\gamma,p\rangle\beta}.
$$
\end{lemma}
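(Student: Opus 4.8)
The plan is to derive this as an immediate specialization of Lemma~\ref{luse}. Recall that the derivation $\Delta^p_{\beta}=x^p\sum_{j=1}^n\beta_jx_j\partial_j$ extends uniquely to the homogeneous derivation $D^p_{\beta}$ of the Witt algebra $\mathcal{W}_n$, as noted in Section~\ref{s1-1}. Since the embedding $W_n\subseteq\mathcal{W}_n$ is a homomorphism of Lie algebras, the bracket of two type~II derivations computed inside $W_n$ agrees with the bracket of their extensions computed inside $\mathcal{W}_n$. Therefore it suffices to apply Lemma~\ref{luse} with $c=p$, $d=q$, $\alpha=\beta$ and $\gamma$ playing the role of the second weight vector.

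First I would observe that both $p$ and $q$ are vectors in $\mathbb{Z}^n$ with non-negative coordinates, so they are in particular elements of $\mathbb{Z}^n$, and $\beta,\gamma\in\mathbb{K}^n$; thus the hypotheses of Lemma~\ref{luse} are satisfied verbatim. Substituting these into the conclusion of Lemma~\ref{luse} gives
$$
[D^p_{\beta},D^q_{\gamma}]=D^{p+q}_{\langle\beta,q\rangle\gamma-\langle\gamma,p\rangle\beta}.
$$
Restricting back to $W_n$ and rewriting $D^p_{\beta}$ as $\Delta^p_{\beta}$ yields exactly the claimed identity, since $p+q$ again has non-negative coordinates and so $D^{p+q}_{\langle\beta,q\rangle\gamma-\langle\gamma,p\rangle\beta}$ is a genuine type~II derivation $\Delta^{p+q}_{\langle\beta,q\rangle\gamma-\langle\gamma,p\rangle\beta}$ of $W_n$ (or zero, when the resulting weight vector vanishes).

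There is essentially no obstacle here: the entire content is the bookkeeping of matching notation between the $D^c_{\alpha}$ parametrization used in Lemma~\ref{luse} and the $\Delta^p_{\beta}$ parametrization used in this section. The only point requiring a word of care is that the bracket stays within the span of type~II derivations, but this is automatic because $\langle\beta,q\rangle\gamma-\langle\gamma,p\rangle\beta$ is simply another vector in $\mathbb{K}^n$ and $p+q\in\mathbb{Z}^n_{\ge 0}$, so the right-hand side is of the required form $\Delta^{p+q}_{(\cdot)}$. Alternatively, if one preferred a self-contained argument, one could reprove the formula directly by evaluating both sides on each generator $x_i$ exactly as in the proof of Lemma~\ref{luse}; but invoking the earlier lemma is shorter and I would present it that way.
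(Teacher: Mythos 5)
Your proof is correct and is exactly what the paper intends: Lemma~\ref{l1} is stated there as a direct reformulation of Lemma~\ref{luse} (with $c=p$, $d=q$, $\alpha=\beta$ and the second weight vector $\gamma$), given without further argument. Your additional remarks on the bracket-preserving embedding $W_n\subseteq\mathcal{W}_n$ and on $p+q\in\mathbb{Z}^n_{\ge 0}$ keeping the result of type~II are accurate bookkeeping, not a different route.
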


Consider the case of derivations with the same lower indices. 

\begin{lemma} \label{l2}
Let $p,q\in\mathbb{Z}^n_{\ge 0}$ be nonzero vectors and $\beta\in\mathbb{K}^n$. Then the following conditions are equivalent:
\begin{enumerate}
\item[{\rm (i)}]
the derivations $\Delta^p_{\beta}$ and $\Delta^q_{\beta}$ generate a finite dimensional Lie algebra; 
\item[{\rm (ii)}]
$[\Delta^p_{\beta}, \Delta^q_{\beta}]=0$;
\item[{\rm (iii)}] 
$\langle\beta,p\rangle=\langle\beta,q\rangle$. 
\end{enumerate}
\end{lemma}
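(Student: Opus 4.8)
The plan is to prove the cyclic chain of implications (i) $\Rightarrow$ (iii) $\Rightarrow$ (ii) $\Rightarrow$ (i), using Lemma~\ref{l1} as the central computational tool. The key observation is that for two type~II derivations sharing the same $\beta$, Lemma~\ref{l1} specializes to
\[
[\Delta^p_{\beta},\Delta^q_{\beta}]=\bigl(\langle\beta,q\rangle-\langle\beta,p\rangle\bigr)\,\Delta^{p+q}_{\beta},
\]
since both resulting coefficient vectors are scalar multiples of the same $\beta$. This already makes the equivalence of (ii) and (iii) nearly immediate: the bracket vanishes precisely when the scalar $\langle\beta,q\rangle-\langle\beta,p\rangle$ is zero, because $p+q\in\mathbb{Z}^n_{\ge 0}$ is nonzero and $\beta\neq 0$, so $\Delta^{p+q}_{\beta}$ is a genuine nonzero derivation.

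For (iii) $\Rightarrow$ (ii) I would simply substitute $\langle\beta,p\rangle=\langle\beta,q\rangle$ into the boxed formula above to get a zero coefficient, hence a vanishing bracket. The implication (ii) $\Rightarrow$ (i) is then trivial: if the bracket vanishes, the two generators span an abelian Lie algebra of dimension at most two, which is finite-dimensional.

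The only implication requiring real argument is (i) $\Rightarrow$ (iii), which I would prove by contraposition: assuming $\langle\beta,p\rangle\neq\langle\beta,q\rangle$, I show the generated Lie algebra is infinite-dimensional. The strategy is to iterate the bracket and track how the scalar coefficients behave. Writing $s:=\langle\beta,p\rangle$ and $t:=\langle\beta,q\rangle$ with $s\neq t$, a repeated application of the boxed formula produces derivations of the form $\Delta^{p+mq}_{\beta}$ for $m=0,1,2,\dots$, and the key point is that these have pairwise distinct degrees $p+mq\in\mathbb{Z}^n$ (since $q\neq 0$) and hence are linearly independent elements of $\mathcal{W}_n$. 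I expect the main obstacle to be verifying that the scalar coefficients arising from the iterated brackets do not vanish, so that infinitely many of these distinct-degree derivations actually appear in $\mathfrak{g}(\mathcal{D})$. Concretely, computing $[\Delta^q_{\beta},\Delta^{p+mq}_{\beta}]$ yields a coefficient proportional to $\langle\beta,p+mq\rangle-\langle\beta,q\rangle=s+(m-1)t$; one must check this stays nonzero for infinitely many $m$, which holds automatically unless $t=0$, and the case $t=0$ (forcing $s\neq 0$) can be handled symmetrically by bracketing with $\Delta^p_{\beta}$ instead. Once infinitely many linearly independent derivations are exhibited, finite-dimensionality fails, completing the contrapositive and hence the equivalence.
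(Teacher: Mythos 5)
Your handling of (ii)\,$\Leftrightarrow$\,(iii) and (ii)\,$\Rightarrow$\,(i) is correct and coincides with the paper's. The gap is in (i)\,$\Rightarrow$\,(iii). Your chain is sequential: writing $s=\langle\beta,p\rangle$ and $t=\langle\beta,q\rangle$, Lemma~\ref{l1} gives
\[
[\Delta^q_{\beta},\Delta^{p+mq}_{\beta}]=\bigl(s+(m-1)t\bigr)\,\Delta^{p+(m+1)q}_{\beta},
\]
so to place $\Delta^{p+(m+1)q}_{\beta}$ in the algebra you need this coefficient to be nonzero at \emph{every} step up to $m$, not merely at infinitely many $m$. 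The coefficient vanishes at the step $m_0=1-s/t$ whenever that is a positive integer (e.g.\ $s=0$, or $s=-kt$ with $k\in\mathbb{Z}_{>0}$), and then the chain terminates after finitely many steps even though $s\ne t$. Your symmetric fallback of bracketing with $\Delta^p_{\beta}$, whose coefficients are $(m-1)s+t$, dies simultaneously precisely when $s=-t\ne 0$: both chains stop at $m=2$. (Your stated dichotomy is also inverted: when $t=0$ the $q$-chain coefficient is constantly $s\ne 0$, so $t=0$ is the harmless case, not the problematic one.)

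In the surviving bad case $s=-t\ne0$ one can still continue by bracketing the produced elements with each other, e.g.\ $[\Delta^{p+q}_{\beta},\Delta^{p+2q}_{\beta}]=t\,\Delta^{2p+3q}_{\beta}\ne 0$, but this needs to be organized and you do not do it. The paper sidesteps the case analysis by arguing in the terminating direction instead of constructing an explicit infinite chain: nonzero brackets strictly increase the weight, so finite dimensionality forces the process of commuting with the two generators to stop at some nonzero $\Delta^{c_1p+c_2q}_{\beta}$ with $[\Delta^{c_1p+c_2q}_{\beta},\Delta^p_{\beta}]=[\Delta^{c_1p+c_2q}_{\beta},\Delta^q_{\beta}]=0$; this yields $\langle\beta,c_1p+c_2q-p\rangle=\langle\beta,c_1p+c_2q-q\rangle=0$, and subtracting gives $\langle\beta,p\rangle=\langle\beta,q\rangle$ at once. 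You should either adopt that termination argument or supply the additional brackets needed when $\langle\beta,p\rangle=-\langle\beta,q\rangle\ne 0$.
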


\begin{proof}
It is evident that condition (ii) implies (i). Equivalence of ${\rm (ii)}$ and ${\rm (iii)}$ follows from Lemma~\ref{l1}. 

Let us prove implication ${\rm (i)}\Rightarrow {\rm (iii)}$. By computing multiple commutators of the derivations $\Delta^p_{\beta}$ and $\Delta^q_{\beta}$, we obtain derivations of degrees $c'p+c''q$, $c',c''\in\mathbb{Z}_{\ge 0}$, whose weights increase. Replacing a derivation with a proportional one, we can assume that the lower index remains equal $\beta$. It follows from the assumption of finite dimensionality that at some moment we come to a derivation $\Delta^{c_1p+c_2q}_{\beta}$ with
$$
[\Delta^{c_1p+c_2q}_{\beta},\Delta^p_{\beta}]=[\Delta^{c_1p+c_2q}_{\beta},\Delta^q_{\beta}]=0.
$$

These conditions imply $\langle\beta,c_1p+c_2q-p\rangle=\langle\beta,c_1p+c_2q-q\rangle=0$, hence condition ${\rm (iii)}$ holds.
\end{proof}

Now we proceed to examine the case of derivations with different lower indices.

\begin{lemma} \label{l3}
Let $p,q\in\mathbb{Z}^n_{\ge 0}$ be nonzero vectors and the vectors $\beta,\gamma\in\mathbb{K}^n$ are not proportional. If the operators $\Delta^p_{\beta}$ and $\Delta^q_{\gamma}$ generate a finite-dimensional Lie algebra, then there exists $r\in\mathbb{Z}_{\ge 0}$ such that $\langle\beta,rp+q\rangle=0$. 
\end{lemma}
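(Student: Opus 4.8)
The plan is to iterate $\mathrm{ad}\,\Delta^p_{\beta}$ on $\Delta^q_{\gamma}$ and track how the lower index evolves. Write $A=\Delta^p_{\beta}$ and $B=\Delta^q_{\gamma}$. Lemma~\ref{l1} gives, for any homogeneous derivation $\Delta^s_{\delta}$ of type~II,
$$
[A,\Delta^s_{\delta}]=\Delta^{p+s}_{\langle\beta,s\rangle\delta-\langle\delta,p\rangle\beta},
$$
so $(\mathrm{ad}\,A)^k(B)$ is homogeneous of degree $q+kp$. Since $p\ne 0$, these degrees are pairwise distinct, hence the nonzero members of the sequence $B,(\mathrm{ad}\,A)(B),(\mathrm{ad}\,A)^2(B),\ldots$ lie in distinct homogeneous components and are therefore linearly independent. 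Finite dimensionality of $\mathfrak{g}(\{A,B\})$ forces this sequence to terminate, i.e. $(\mathrm{ad}\,A)^N(B)=0$ for some $N\ge 1$.

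The next step is to compute the index vectors. As $\beta$ and $\gamma$ are not proportional, they are linearly independent, and I may write the lower index of $(\mathrm{ad}\,A)^k(B)$ as $u_k\beta+v_k\gamma$. Substituting $\delta=u_k\beta+v_k\gamma$ and $s=q+kp$ into the bracket formula and reading off only the $\gamma$-component yields the scalar recursion
$$
v_{k+1}=\langle\beta,q+kp\rangle\,v_k,\qquad v_0=1,
$$
whence $v_k=\prod_{j=0}^{k-1}\langle\beta,q+jp\rangle$. (The coefficient $u_k$ satisfies a more complicated recursion involving $\langle\gamma,p\rangle$, but I will not need it.)

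Finally, the equality $(\mathrm{ad}\,A)^N(B)=\Delta^{q+Np}_{u_N\beta+v_N\gamma}=0$ together with the linear independence of $\beta$ and $\gamma$ forces $v_N=0$. Since $\mathbb{K}$ is a field, the product formula then gives $\langle\beta,q+jp\rangle=0$ for some $0\le j\le N-1$, and taking $r=j\in\mathbb{Z}_{\ge 0}$ yields $\langle\beta,rp+q\rangle=0$, as required.

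I expect the one point requiring care to be the second step: the payoff of the whole argument is the observation that the $\gamma$-coefficient \emph{decouples} from $u_k$ into the transparent product above, which is exactly what lets a single vanishing factor be extracted. The first step (distinct degrees $\Rightarrow$ eventual vanishing under finite dimensionality) and the last step (a vanishing product over a field kills a factor) are then routine.
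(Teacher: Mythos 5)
Your proposal is correct and follows essentially the same route as the paper: iterate $\mathrm{ad}\,\Delta^p_{\beta}$ on $\Delta^q_{\gamma}$, note that the resulting derivations lie in distinct homogeneous components so finite dimensionality forces eventual vanishing, and observe that the $\gamma$-coefficient of the lower index satisfies $v_{k+1}=\langle\beta,q+kp\rangle v_k$, so a vanishing $v_N$ yields a vanishing factor $\langle\beta,rp+q\rangle$. The paper phrases this by taking $r$ to be the last step at which $v\ne 0$ rather than via your explicit product formula, but the argument is the same.
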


\begin{proof}
We have
\[
 [\underbrace{ \Delta^p_{\beta},\ldots, [\Delta^p_{\beta}}_{\mbox{$t$ times}},\Delta^q_{\gamma}] \ldots] = \Delta^{tp+q}_{u\beta+v\gamma} \ ,
 \]
where $u,v\in\mathbb{K}$. In accordance with the condition of finite dimensionality, this operator is identically zero for sufficiently large values of $t$. In particular, the coefficient $v$ is zero. Let $r$ be the largest value of the parameter $t$ such that the coefficient $v$ is not zero.

We obtain
$$
[\Delta^p_{\beta},\Delta^{rp+q}_{u\beta+v\gamma}]=\Delta^{(r+1)p+q}_{\langle\beta,rp+q\rangle(u\beta+v\gamma)-\langle u\beta+v\gamma,p\rangle\beta}.
$$

It follows that $\langle\beta,rp+q\rangle v =0$. Thus we have $\langle\beta,rp+q\rangle =0$, as claimed.
\end{proof}

\begin{lemma} \label{l4}
Under the conditions of Lemma~\ref{l3}, let $r$ and $s$ be the smallest non-negative integers such that $\langle\beta,rp+q\rangle=\langle\gamma,p+sq\rangle=0$. Then either $r=0$ or $s=0$. 
\end{lemma}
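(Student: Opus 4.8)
The plan is to assume that $\mathfrak g:=\mathfrak g(\Delta^p_\beta,\Delta^q_\gamma)$ is finite dimensional and to rule out $r\ge 1$ and $s\ge 1$ holding simultaneously. Abbreviate $a:=\langle\beta,p\rangle$, $b:=\langle\beta,q\rangle$, $c:=\langle\gamma,p\rangle$, $d:=\langle\gamma,q\rangle$. Since $r=0$ means exactly $b=0$ and $s=0$ means exactly $c=0$, the assertion is equivalent to $bc=0$. Suppose instead $r,s\ge 1$. Minimality of $r$ forces $b\ne 0$, and $ra+b=0$ then forces $a\ne 0$ with $b=-ra$; symmetrically $c,d\ne 0$ and $c=-sd$ (existence of $r,s$ being provided by Lemma~\ref{l3} and its symmetric counterpart). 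Thus all four numbers are nonzero, and I would single out the determinant of $M:=\left(\begin{smallmatrix}a&b\\ c&d\end{smallmatrix}\right)$, which by these relations equals $\det M=ad-bc=ad(1-rs)$ and hence vanishes precisely when $r=s=1$.

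In the generic case $\det M\ne 0$ I would exploit that every nonzero derivation $\Delta^{mp+nq}_\mu\in\mathfrak g$ has weight $m\,|p|+n\,|q|>0$, so $\mathfrak g$ is graded by strictly positive weights and, being finite dimensional, has a nonzero homogeneous derivation $\Delta^D_\mu$ of maximal weight. Any bracket of it with a generator raises the weight, so it commutes with both generators and is therefore central; by Lemma~\ref{l1} this gives $\langle\beta,D\rangle\mu=\langle\mu,p\rangle\beta$ and $\langle\gamma,D\rangle\mu=\langle\mu,q\rangle\gamma$. Writing $\mu=u\beta+v\gamma$ and comparing coefficients in the basis $\{\beta,\gamma\}$, a short case analysis closes the argument: if $u,v\ne 0$ the two relations force $\langle\mu,p\rangle=\langle\mu,q\rangle=0$, i.e. $(u,v)M=0$, impossible since $M$ is invertible; if $\mu$ is proportional to $\beta$ the second relation forces $b=0$; and if $\mu$ is proportional to $\gamma$ the first forces $c=0$. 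Each alternative contradicts $b,c\ne 0$, so $\det M\ne 0$ cannot occur.

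The degenerate case $\det M=0$, i.e. $r=s=1$, is the main obstacle, because there the central top component may legitimately have index orthogonal to both $p$ and $q$ and the contradiction above evaporates. For this case I would produce an explicit infinite family. Set $\mu_1:=b\gamma-c\beta=d\beta-a\gamma$, so that $[\Delta^p_\beta,\Delta^q_\gamma]=\Delta^{p+q}_{\mu_1}$ by Lemma~\ref{l1}. Using $b=-a$ and $c=-d$ one checks the two identities $\langle\beta,p+q\rangle=a+b=0$ and $\langle\mu_1,(t+1)p+tq\rangle=2ad$ for every $t\ge 0$. Feeding these into Lemma~\ref{l1} and inducting on $t$, the iterated commutators stay proportional to $\beta$:
\[
(\mathrm{ad}\,\Delta^{p+q}_{\mu_1})^{t}(\Delta^p_\beta)=(2ad)^{t}\,\Delta^{(t+1)p+tq}_{\beta}.
\]
Since $ad\ne 0$, these are nonzero homogeneous derivations of pairwise distinct degrees $(t+1)p+tq$, hence infinitely many linearly independent elements of $\mathfrak g$, contradicting finite dimensionality.

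Together the two cases show that $r\ge 1$ and $s\ge 1$ are incompatible with finite dimensionality, so $r=0$ or $s=0$. Apart from the degenerate case, the points that need care are the opening reduction — extracting $a,b,c,d\ne 0$ and $b=-ra$, $c=-sd$ from minimality of $r,s$ — and the observation that the weight grading of $\mathfrak g$ involves only positive degrees, which is exactly what guarantees the central maximal component driving the generic case.
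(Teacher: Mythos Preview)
Your proof is correct and takes a genuinely different route from the paper's argument. The paper splits according to whether $r\ge 2$, $s\ge 1$ (or symmetrically) versus $r=s=1$; in the former case it tracks the $\beta$- and $\gamma$-coefficients along an alternating sequence of commutators $[\Delta^q_\gamma,[\Delta^p_\beta,[\Delta^q_\gamma,\ldots]]]$ and argues that neither coefficient can vanish first, while in the case $r=s=1$ it passes from the pair $(\Delta^q_\gamma,\Delta^p_\beta)$ to a new pair $(\Delta^{p+2q}_\gamma,\Delta^p_\beta)$ with the same property and iterates. Your split is instead governed by the matrix $M=\left(\begin{smallmatrix}a&b\\ c&d\end{smallmatrix}\right)$, whose determinant $ad(1-rs)$ makes the dichotomy $rs\ne 1$ versus $r=s=1$ transparent. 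In the invertible case your maximal-weight central element argument is cleaner and more conceptual than the paper's coefficient bookkeeping: centrality gives two linear relations on $\mu$, and nondegeneracy of $M$ forces $\mu=0$. In the singular case your explicit family $(\mathrm{ad}\,\Delta^{p+q}_{\mu_1})^t(\Delta^p_\beta)=(2ad)^t\Delta^{(t+1)p+tq}_\beta$ is a one-shot infinite sequence rather than an iterative pair-replacement. Both degenerate-case arguments ultimately exploit $\langle\beta,p+q\rangle=0$, but yours reaches the contradiction in a single closed formula. The paper's approach has the minor advantage of being entirely computational with no auxiliary structure, while yours isolates the linear-algebraic obstruction more clearly and would generalise more readily.
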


\begin{proof}
Suppose $r\ge 2$ and $s\ge 1$. Recall that 
$$
[\Delta^q_{\gamma}, \Delta^p_{\beta}]=\Delta^{p+q}_{\langle\gamma,p\rangle\beta-\langle\beta,q\rangle\gamma}.
$$
By assumption, the coefficients at $\beta$ and $\gamma$ in the operator on the right are nonzero. Commuting the operator $\Delta^p_{\beta}$ with $\Delta^q_{\gamma}$ and $\Delta^p_{\beta}$ successively, we obtain
\begin{equation} \label{f1}
[\Delta^q_{\gamma}, \Delta^{tp+(t-1)q}_{u\beta+v\gamma}]=\Delta^{tp+tq}_{\langle\gamma, tp+(t-1)q\rangle(u\beta+v\gamma)-\langle u\beta+v\gamma,q\rangle\gamma}
\end{equation}
and
\begin{equation} \label{f2}
[\Delta^p_{\beta}, \Delta^{tp+tq}_{u'\beta+v'\gamma}]=\Delta^{(t+1)p+tq}_{\langle\beta, tp+tq\rangle(u'\beta+v'\gamma)-\langle u'\beta+v'\gamma,p\rangle\beta}. 
\end{equation}

It follows from the finite dimensionality condition that at some moment these commutators should vanish. Consequently, the coefficients at $\beta$ and $\gamma$ will vanish. 

If we have $\langle\gamma, tp+(t-1)q\rangle u=0$, where $u\ne 0$ and $\langle\gamma,p+sq\rangle=0$, then $1\le s=\frac{t-1}{t}$, and we come to a contradiction. Thus, the last nonzero coefficient at $\beta$ appears in (\ref{f1}).

Now let $\langle\beta,tp+tq\rangle v'=0$ with $v'\ne 0$. Then $\langle\beta,p+q\rangle=0$, which contradicts the assumption $r\ge 2$. Thus, the last nonzero coefficient at $\gamma$ occurs in formula~(\ref{f2}). 

Let us suppose that the coefficient at $\beta$ has become the first to take only zero values. Then, in all of the following steps, the coefficient at $\gamma$ in formula~(\ref{f1}) is equal to
$$
v\langle\gamma,tp+(t-2)q\rangle.
$$ 

Hence, if this coefficient equals zero, we obtain a contradiction because of~${1\le s=\frac{t-2}{t}}$. 

Let us assume now that the coefficient at $\gamma$ has become the first to take only zero values. Then in all subsequent steps, the coefficient at $\beta$ in formula~(\ref{f2}) is equal to
$u'\langle\beta,(t-1)p+tq\rangle.$ If it is equal to zero, then $2\le r=\frac{t-1}{t}$ gives a contradiction. Therefore, neither of these cases is possible.

It remains to consider the case $r=s=1$. We have $\langle\beta,p+q\rangle=\langle\gamma,p+q\rangle=0$, but the pairings 
$$
\langle\beta,p\rangle, \quad \langle\beta,q\rangle, \quad \langle\gamma,p\rangle, \quad \text{and} \quad \langle\gamma,q\rangle 
$$
are nonzero. We obtain
$$
[\Delta^q_{\gamma},[\Delta^q_{\gamma}, \Delta^p_{\beta}]]=[\Delta^q_{\gamma},\Delta^{p+q}_{\langle\gamma,p\rangle\beta-\langle\beta,q\rangle\gamma}]= 
\Delta^{p+2q}_{\omega},
$$
where
$$
\omega=\langle\gamma,p+q\rangle(\langle\gamma,p\rangle\beta-\langle\beta,q\rangle\gamma)-\langle\langle\gamma,p\rangle\beta-\langle\beta,q\rangle\gamma,q\rangle\gamma=v\gamma
$$
with $v=-\langle\langle\gamma,p\rangle\beta-\langle\beta,q\rangle\gamma,q\rangle=\langle\beta,q\rangle\langle\gamma,q-p\rangle\ne 0.$ 

Thus, together with the elements $(\Delta^q_{\gamma}, \Delta^p_{\beta})$, the Lie algebra $\mathfrak{g}(\mathcal{D})$ also contains the elements $(\Delta^{p+2q}_{\gamma}, \Delta^p_{\beta})$ with the condition
$$
\langle\beta,p+2q+p\rangle=\langle\gamma,p+2q+p\rangle=0
$$ 
and $\langle\beta,p\rangle$, $\langle\beta,p+2q\rangle$, $\langle\gamma,p\rangle$, and $\langle\gamma,p+2q\rangle$ are nonzero. 

Repeating the procedure, we pass from this pair to the pair $(\Delta^{p+2(p+2q)}_{\gamma}, \Delta^p_{\beta})$, and so on. This is a contradiction with the finite dimensionality condition. This completes the proof.
\end{proof}

Let us turn to the finite set $\mathcal{D}=\left\{\Delta^{p(1)}_{\beta(1)},\ldots,\Delta^{p(k)}_{\beta(k)}\right\}$. Let $\mathfrak{g}(\mathcal{D})$ be the Lie algebra generated by this set.
Since $[\Delta^0_{\beta},\Delta^q_{\gamma}] = \Delta^q_{\langle\beta,q\rangle\gamma}$, i.e., the commutator is proportional to $\Delta^q_{\gamma}$, the property of the Lie algebra $\mathfrak{g}(\mathcal{D})$ to be finite-dimensional remains unaffected by the presence of zero-weight derivations.  Therefore, we further assume that all derivations in the set $\mathcal{D}$ have positive weights.

Let us construct a directed graph $\Gamma(\mathcal{D})$. Its vertices are labeled by $1$ to $k$ and there is an edge from vertex $i$  to vertex $j$ if and only if the vectors $\beta(i)$ and $\beta(j)$ are not proportional and~$\langle\beta(i),p(j)\rangle\ne 0$.

\begin{lemma}\label{l5}
If the Lie algebra $\mathfrak{g}(\mathcal{D})$ is finite dimensional, then the graph $\Gamma(\mathcal{D})$ does not contain oriented cycles. 
\end{lemma}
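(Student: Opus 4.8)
The plan is to argue by contradiction: assume $\mathfrak{g}(\mathcal{D})$ is finite-dimensional but $\Gamma(\mathcal{D})$ contains an oriented cycle. Since any subcollection of $\mathcal{D}$ generates a subalgebra of the finite-dimensional algebra $\mathfrak{g}(\mathcal{D})$, every such subalgebra is again finite-dimensional, so Lemmas~\ref{l1}--\ref{l4} apply to any pair of the derivations involved. I would run a descent on the length of the shortest cycle, taken over \emph{all} finite generating sets producing the same Lie algebra. Precisely, among all finite sets $\mathcal{E}$ of type~II derivations with $\mathfrak{g}(\mathcal{E})=\mathfrak{g}(\mathcal{D})$ whose graph $\Gamma(\mathcal{E})$ has an oriented cycle, choose one for which the minimal cycle length $l$ is as small as possible; by assumption such an $\mathcal{E}$ exists, and since an edge requires non-proportional $\beta$'s there are no self-loops, so $l\ge 2$.

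The base case $l=2$ is immediate. A $2$-cycle $i\to j\to i$ means $\langle\beta(i),p(j)\rangle\ne 0$ and $\langle\beta(j),p(i)\rangle\ne 0$ with $\beta(i),\beta(j)$ non-proportional, so by Lemma~\ref{l3} the minimal $r$ with $\langle\beta(i),rp(i)+p(j)\rangle=0$ and the minimal $s$ with $\langle\beta(j),p(i)+sp(j)\rangle=0$ both exist, and Lemma~\ref{l4} forces $r=0$ or $s=0$. But $r=0$ gives $\langle\beta(i),p(j)\rangle=0$, contradicting the edge $i\to j$, and $s=0$ gives $\langle\beta(j),p(i)\rangle=0$, contradicting $j\to i$. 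Hence $l=2$ cannot occur.

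For $l\ge 3$, write the cycle as $i_1\to i_2\to\dots\to i_l\to i_1$. Minimality of $l$ excludes the $2$-cycle $i_1\to i_2\to i_1$, so there is no back edge $i_2\to i_1$; since $\beta(i_1),\beta(i_2)$ are non-proportional, this forces $\langle\beta(i_2),p(i_1)\rangle=0$, and Lemma~\ref{l1} then yields the clean identity
$$
[\Delta^{p(i_1)}_{\beta(i_1)},\Delta^{p(i_2)}_{\beta(i_2)}]=\langle\beta(i_1),p(i_2)\rangle\,\Delta^{p(i_1)+p(i_2)}_{\beta(i_2)},
$$
a nonzero type~II derivation in the same $\beta$-direction as $\Delta^{p(i_2)}_{\beta(i_2)}$. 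I would enlarge $\mathcal{E}$ by the element $*:=\Delta^{p(i_1)+p(i_2)}_{\beta(i_2)}$, which does not change the generated Lie algebra, and attempt to close the shorter cycle $*\to i_3\to\dots\to i_l\to *$ of length $l-1$. The outgoing edge $*\to i_3$ is automatic, since $*$ carries the direction $\beta(i_2)$ and the edge $i_2\to i_3$ gives both $\langle\beta(i_2),p(i_3)\rangle\ne 0$ and non-proportionality of $\beta(i_2),\beta(i_3)$.

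The incoming edge $i_l\to *$ is the crux, and this is where I expect the main obstacle; here the argument splits but, pleasingly, every branch collapses the minimal length. First, $\beta(i_l)$ and $\beta(i_2)$ must be non-proportional: otherwise $\langle\beta(i_l),p(i_1)\rangle\ne 0$ (the edge $i_l\to i_1$) would give $\langle\beta(i_2),p(i_1)\rangle\ne 0$, hence an edge $i_2\to i_1$ and a forbidden $2$-cycle. For the pairing, either $\langle\beta(i_l),p(i_1)+p(i_2)\rangle\ne 0$, so $i_l\to *$ is a genuine edge and $*\to i_3\to\dots\to i_l\to *$ is a length-$(l-1)$ cycle in the graph of a set generating the same algebra, contradicting minimality; or $\langle\beta(i_l),p(i_1)+p(i_2)\rangle=0$, which together with $\langle\beta(i_l),p(i_1)\rangle\ne 0$ forces $\langle\beta(i_l),p(i_2)\rangle\ne 0$ and hence an edge $i_l\to i_2$, producing the length-$(l-1)$ cycle $i_2\to i_3\to\dots\to i_l\to i_2$ and again contradicting minimality. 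Thus the single way the merged edge can fail is itself a shortcut. The delicate point is therefore not the computation but the bookkeeping: setting up the descent over all Lie-equivalent generating sets, and noting that finite-dimensionality is used only at the base case $l=2$ through Lemma~\ref{l4}, while the step $l\rightsquigarrow l-1$ is purely structural.
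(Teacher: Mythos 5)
Your proof is correct and follows essentially the same route as the paper: $2$-cycles are excluded by Lemma~\ref{l4}, and longer cycles are contracted by commuting consecutive cycle elements, which by Lemma~\ref{l1} produces a derivation in the direction $\beta$ of the second element with summed degree vector (the paper does the contraction in one shot with a nested commutator collapsing vertices $1,\dots,s-1$ of a minimal cycle, whereas you contract one edge at a time and run a descent over generating sets of the same algebra). Your edge-by-edge version has the minor virtue of handling explicitly, via the two branches for $\langle\beta(i_l),p(i_1)+p(i_2)\rangle$, the degenerate possibilities that the paper subsumes under the assertion that minimality forces all non-cycle pairings to vanish.
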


\begin{proof}
From Lemma~\ref{l4} it follows that the graph $\Gamma(\mathcal{D})$ does not contain oriented cycles of length $2$. Renumbering elements of the set $\mathcal{D}$, we may assume that derivations $\Delta^{p(1)}_{\beta(1)},\ldots,\Delta^{p(s)}_{\beta(s)}$ correspond to an oriented cycle of the minimal length $s\ge 3$. 

By the cyclicity condition, we have
$$
\langle\beta(1),p(2)\rangle\ne 0, \quad \langle\beta(2),p(3)\rangle\ne 0, \quad \ldots, \quad \langle\beta(s),p(1)\rangle\ne 0, 
$$ 
and by the minimality condition we conclude that $\langle\beta(i),p(j)\rangle=0$ for all other pairs of indices $1\le i\ne j\le s$. 

Let us consider
\[
 \left[\Delta^{p(1)}_{\beta(1)},\left[\Delta^{p(2)}_{\beta(2 )},\ldots \left[\Delta^{p(s-2)}_{\beta(s-2)},\Delta^{p(s-1)}_{\beta(s-1)}\right]\ldots\right]\right].
\]

It is easy to see that this commutator is proportional to the operator $\Delta^{p(1)+\ldots+p(s-1)}_{\beta(s-1)}$ with a nonzero coefficient.

By
$$
\langle\beta(s-1),p(s)\rangle\ne 0, \qquad \langle \beta(s), p(1)+\ldots+p(s-1)\rangle\ne 0,
$$
we obtain a contradiction to Lemma~\ref{l4} applied to the operators $\Delta^{p(1)+\ldots+p(s-1)}_{\beta(s-1)}$ and $\Delta^{p(s)}_{\beta(s)}$. This completes the proof.
\end{proof}

We are now in a position to present the main result.

\begin{theorem}
\label{t2}
Let $\mathcal{D}=\left\{\Delta^{p(1)}_{\beta(1)},\ldots,\Delta^{p(k)}_{\beta(k)}\right\}$ be a set of derivations of type~II and of positive weights. The Lie algebra $\mathfrak{g}(\mathcal{D})$ generated by this set is finite dimensional if and only if the elements of $\mathcal{D}$ can be renumbered in such a way that the following conditions hold:
\begin{enumerate}
\item[1)]
if $\beta(i)$, $\beta(j)$ are proportional, then $\langle\beta(i),p(i)-p(j)\rangle=0$;
\item[2)] 
if $\beta(i)$, $\beta(j)$ are not proportional and $j>i$, then $\langle\beta(j),p(i)\rangle=0$ and there exists $r_{ij}\in\mathbb{Z}_{\ge 0}$ such that $\langle\beta(i),p(j)+r_{ij}p(i)\rangle=0$.
\end{enumerate}
\end{theorem}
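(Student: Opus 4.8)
My plan is to establish the two implications of Theorem~\ref{t2} separately, in both cases reducing to the pairwise analysis of Lemmas~\ref{l2}--\ref{l5}.

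For necessity, suppose $\mathfrak{g}(\mathcal{D})$ is finite dimensional; then so is every subalgebra generated by a pair of the $\Delta^{p(i)}_{\beta(i)}$. If $\beta(i),\beta(j)$ are proportional, rescaling one generator (which changes neither the line it spans nor $\mathfrak{g}(\mathcal{D})$) reduces us to equal lower indices, and Lemma~\ref{l2} yields $\langle\beta(i),p(i)-p(j)\rangle=0$; since this equality is symmetric in $i,j$, it imposes no constraint on an ordering, so condition~1) holds for any renumbering. For the non-proportional pairs, Lemma~\ref{l5} shows that $\Gamma(\mathcal{D})$ is acyclic, so I would renumber its vertices by a topological order, making every edge run from a smaller to a larger index. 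For $j>i$ this means there is no edge $j\to i$, i.e. $\langle\beta(j),p(i)\rangle=0$, and Lemma~\ref{l3} applied to the pair $\Delta^{p(i)}_{\beta(i)},\Delta^{p(j)}_{\beta(j)}$ supplies the required $r_{ij}$. This establishes condition~2).

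For sufficiency the engine is a local-nilpotency computation built from Lemma~\ref{l1}. Fix the renumbering of conditions 1)--2). For non-proportional $\beta(i),\beta(j)$ with $j>i$, the vanishing $\langle\beta(j),p(i)\rangle=0$ makes $[\Delta^{p(i)}_{\beta(i)},\Delta^{tp(i)+p(j)}_{\beta(j)}]$ proportional to $\Delta^{(t+1)p(i)+p(j)}_{\beta(j)}$ with proportionality factor $\langle\beta(i),tp(i)+p(j)\rangle$; hence, by induction on $t$, the iterated bracket $(\mathrm{ad}\,\Delta^{p(i)}_{\beta(i)})^{t}\Delta^{p(j)}_{\beta(j)}$ is a multiple of $\Delta^{tp(i)+p(j)}_{\beta(j)}$ whose coefficient acquires this factor at each step, and taking $r_{ij}$ minimal the coefficient first vanishes at $t=r_{ij}+1$. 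For $j<i$ non-proportional, $\langle\beta(i),p(j)\rangle=0$ forces $(\mathrm{ad}\,\Delta^{p(i)}_{\beta(i)})^{2}\Delta^{p(j)}_{\beta(j)}=0$, and for proportional indices the generators commute by condition~1) and Lemma~\ref{l2}. Thus each $\mathrm{ad}\,\Delta^{p(i)}_{\beta(i)}$ is nilpotent on every generator; since the Leibniz rule makes the set of elements annihilated by some power of a fixed derivation a subalgebra, each $\mathrm{ad}\,\Delta^{p(i)}_{\beta(i)}$ is locally nilpotent on all of $\mathfrak{g}(\mathcal{D})$.

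It remains to pass from local nilpotency to finite dimensionality. The weight induces a grading of $\mathfrak{g}(\mathcal{D})$ by positive integers whose components are finite dimensional --- for a fixed weight $w$ only finitely many degrees $P=\sum_i c_ip(i)$ with $c_i\in\mathbb{Z}_{\ge 0}$ satisfy $\sum_s P_s=w$, and each homogeneous component has dimension at most $n$ --- so it suffices to bound the weights occurring in $\mathfrak{g}(\mathcal{D})$, equivalently to prove that $\mathfrak{g}(\mathcal{D})$ is nilpotent. I would argue by induction on $k$: removing the largest generator leaves a finite-dimensional $\mathfrak{h}=\mathfrak{g}(\Delta^{p(1)}_{\beta(1)},\ldots,\Delta^{p(k-1)}_{\beta(k-1)})$, and $\mathfrak{g}(\mathcal{D})=\mathfrak{h}+\mathfrak{b}$, where $\mathfrak{b}$ is the ideal generated by $\Delta^{p(k)}_{\beta(k)}$; one then bounds the weight of $\mathfrak{b}$ using the local nilpotency above together with the triangular vanishing $\langle\beta(k),p(i)\rangle=0$ for non-proportional $i<k$. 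I expect this last step to be the \emph{main obstacle}: the per-generator nilpotency bounds only \emph{consecutive} applications of a single $\mathrm{ad}\,\Delta^{p(i)}_{\beta(i)}$, whereas in an iterated commutator the generators are interspersed and may repeat, and $\mathrm{ad}\,\Delta^{p(k)}_{\beta(k)}$ is not nilpotent of bounded order on the whole algebra, so converting the local information into a uniform weight bound is delicate. I would address it by filtering $\mathfrak{b}$ by the number of occurrences of $\Delta^{p(k)}_{\beta(k)}$ and showing, via the vanishing conditions, that this filtration stabilizes after finitely many steps with finite-dimensional quotients.
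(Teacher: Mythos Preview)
Your necessity argument is correct and essentially identical to the paper's: Lemmas~\ref{l2}--\ref{l5} plus a topological ordering of the acyclic graph $\Gamma(\mathcal{D})$.

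For sufficiency there is a genuine gap, and it is exactly the point you flag as the ``main obstacle''. Your filtration of $\mathfrak{b}$ by the number of occurrences of $\Delta^{p(k)}_{\beta(k)}$ collapses at the first step: since $\langle\beta(k),p(i)\rangle=0$ for every non-proportional $i<k$ (and proportional generators commute with $\Delta^{p(k)}_{\beta(k)}$ by condition~1)), every element of $\mathfrak{b}$ has lower index $\beta(k)$ and upper index of the form $Q=p(k)+\sum_{i<k} c_i p(i)$ with $\langle\beta(k),Q\rangle=\langle\beta(k),p(k)\rangle$; hence any two such elements commute by Lemma~\ref{l1}, $\mathfrak{b}$ is abelian, and $\mathfrak{b}=F_1$ is already the cyclic $\mathfrak{h}$-module $U(\mathfrak{h})\cdot\Delta^{p(k)}_{\beta(k)}$. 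So the entire burden falls on showing this cyclic module is finite dimensional, and local nilpotency of each $\mathrm{ad}\,\Delta^{p(i)}_{\beta(i)}$ is not enough for that: a finite-dimensional nilpotent Lie algebra can have infinite-dimensional cyclic modules on which every element acts locally nilpotently. Concretely, already for three generators, applying $\mathrm{ad}\,\Delta^{p(2)}_{\beta(2)}$ raises the nilpotency index of $\mathrm{ad}\,\Delta^{p(1)}_{\beta(1)}$ on the resulting vector by $r_{12}$, so the per-generator bounds $r_{ik}$ do not survive interleaving without a further argument.

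The paper supplies exactly this missing ingredient. It groups the generators into classes $C_1,\ldots,C_s$ by the value of $\beta$ and inducts on the class index. To each new commutator $\Delta^{q(t)}_{\beta(j)}$ in class $C_l$ it attaches the vector $R_t=(r_1,\ldots,r_m)$, where $m=|C_1\cup\cdots\cup C_{l-1}|$ and $r_i$ is the least non-negative integer with $\langle\beta(i),q(t)+r_ip(i)\rangle=0$. The crucial computation (using conditions~1) and~2), which are shown to persist for all new elements) is that each bracketing with an element of an earlier class makes $R_t$ strictly smaller in the right-to-left lexicographic order on $\mathbb{Z}_{\ge 0}^m$; well-ordering then forces the process to terminate. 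This lexicographic descent is precisely what turns your per-generator bounds into a uniform termination argument for arbitrarily interleaved commutators, and it is what your proposal is missing.
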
 

\section{Proof of the main result}
\label{s3-1}

We come to the proof of Theorem~\ref{t2}. The necessity of conditions~1) and~2) follow from Lemmas~\ref{l2}-\ref{l3} and Lemma~\ref{l5}. Now we prove the sufficiency of these conditions. 

Let the derivations be numbered in the manner specified in Theorem~\ref{t2}. Replacing the derivations by proportional ones, we assume additionally that if the vectors $\beta(i)$ and $\beta(j)$ are proportional, then they are equal. 

Furthermore, it can be assumed that derivations with the same $\beta$ are consecutive in the set~$\mathcal{D}$. If this is not the case, a derivation with the given $\beta$ and the smallest number can be rearranged immediately before the next derivation with the same $\beta$. It follows from conditions~1) and~2) that such a rearrangement does not break the conditons on  the ordering, and by such permutations we obtain the required one.

Let us decompose derivations from the set $\mathcal{D}$ into classes $C_1,\ldots,C_s$ according to the value of the vector $\beta$. We aim to show that, repeatedly commuting these derivations, we can obtain, up to proportionality, only a finite number of new homogeneous derivations.

We have
\[
 \left[\Delta^{p(i)}_{\beta(i)},\Delta^{p(j)}_{\beta(j)}\right]  = \langle\beta(i),p(j)\rangle\Delta^{p(i)+p(j)}_{\beta(j)}, \qquad j>i.
\]

It follows that adding a commutator to the set $\mathcal{D}$ does not affect the number of classes, and a nonzero commutator of two derivations belongs to a class with bigger index.

Any two derivations belonging to the same class commute. In particular, the first class cannot be extended by adding commutators.

Let us prove by induction on $l$ that the class $C_l$, $l \le s$, can be extended only by a finite number of elements. 

By induction hypothesis, we can assume that all possible multiple commutators have already been added to the classes $C_1,\ldots,C_{l-1}$. After this the number of elements in all classes remains finite and the aforementioned conditions on derivations are satisfied. 

If an infinite number of elements can be added to the class $C_l$, then there exists a derivation $\Delta^{p(j)}_{\beta(j)}\in C_l$ and an infinite set of derivations $\{\Delta^{q(t)}_{\beta(j)}\}_{t\in\mathbb{Z}_{>0}}$, where $\Delta^{q(1)}_{\beta(j)}=\Delta^{p(j)}_{\beta(j)}$, such that each subsequent element $\Delta^{q(t)}_{\beta(j)}$ is obtained by commuting $\Delta^{q(t-1)}_{\beta(j)}$ with some derivation $\Delta^{p(b)}_{\beta(b)}$ from the previous classes. In particular, we have $q(t)=q(t-1)+p(b)$. 

Let us verify that when we add a derivation $\Delta^{q(t)}_{\beta(j)}$ to the set, the conditions for the new set still hold. It is clear that $\langle\beta(a),q(t)\rangle=0$, $a>j$, and  $\langle\beta(j),q(t)-q(t-1)\rangle=\langle\beta(j),p(b)\rangle=0$. Hence, the same equality holds for all derivations from this class. 

We can also assume that for any derivation $\Delta^{p(i)}_{\beta(i)}$ from the previous classes, the condition $\langle\beta(i),q(t-1)+rp(i)\rangle=0$ is satisfied for some $r\in\mathbb{Z}_{\ge 0}$ that depends on the number $i$.

Let us prove that 
$$
\langle\beta(i),q(t)+r^*p(i)\rangle=0
$$ 
for a suitable $r^*\in\mathbb{Z}_{\ge 0}$. Let $\Delta^{p(i)}_{\beta(i)} \in C_u$, $\Delta^{p(b)}_{\beta(b)} \in C_v.$ If $u<v$ then by induction hypothesis there is $r'\in\mathbb{Z}_{\ge 0}$ such that 
$$
\langle\beta(i),p(b)+r'p(i)\rangle=0.
$$ 
So we have
$$
\langle\beta(i),q(t)+(r+r')p(i)\rangle=\langle\beta(i),q(t-1)+rp(i)+p(b)+r'p(i)\rangle=0.
$$

If $u>v$ then $\langle\beta(i),p(b)\rangle=0$ and 
$$
\langle\beta(i),q(t)+rp(i)\rangle=\langle\beta(i),q(t-1)+rp(i)\rangle=0.
$$

If $u=v$, that is $\beta(i)=\beta(b)$, we have to consider two cases. 

\smallskip

1) If $r=0$ then $[\Delta^{q(t-1)}_{\beta(j)},\Delta^{p(b)}_{\beta(b)}]=0$, because of $\langle\beta(b),q(t-1)\rangle=\langle\beta(i),q(t-1)\rangle=0$. Hence, this case is not realized. 

\smallskip

2) Let $r>0$. We have $\langle\beta(i),p(i)\rangle=\langle\beta(i),p(b)\rangle$, since $\Delta^{p(i)}_{\beta(i)}$, $\Delta^{p(b)}_{\beta(b)}$ belong to the same class. 
This implies
$$
\langle\beta(i),q(t)+(r-1)p(i)\rangle=\langle\beta(i),q(t)-p(b)+rp(i)\rangle=\langle\beta(i),q(t-1)+rp(i)\rangle=0.  
$$
Thus, we have verified that all conditions hold. 

With a derivation $\Delta^{q(t)}_{\beta(j)}$ one associates a vector $R_t=(r_1,\ldots,r_m)$ with non-negative coordinates, where $m$ is the total number of derivations in classes $C_1,\ldots, C_{l-1}.$ Namely, the number $r_i$ is the smallest non-negative integer such that $\langle\beta(i),q(t)+r_ip(i)\rangle=0$, where $\Delta^{p(i)}_{\beta(i)}$ successively runs over all derivations of classes $C_1,\ldots, C_{l-1}.$

It follows from the previous discussion that $R_t$ is strictly less than $R_{t-1}$ lexicographically, where coordinates of vectors are viewed from right to left. It is well known that lexicographically every non-empty subset contains a minimal element. On the other hand, we obtain an infinite sequence of derivations $\Delta^{q(t)}_{\beta(j)}$. This contradicts to well-ordering of the vectors $R_t$. This completes the proof.

{\section{Structure of the Lie algebra $\mathfrak{g}(\mathcal{D})$}
\label{s3-2}

The Lie algebra $\mathfrak{g}(\mathcal{D})$ in Theorem~\ref{t2} is positively graded. So if $\mathfrak{g}(\mathcal{D})$ is finite dimensional then it is nilpotent. If one adds derivations of weight zero to the set of generators $\mathcal{D}$, then finite dimensionality implies solvability but nilpotency is lost. 

Note that since a derivation of type~II has all degree coordinates non-negative, the Lie algebra generated by such derivations does not contain derivations of type~I. 

\begin{lemma}
\label{l36}
For any $d\ge 1$ we have
$$
(\mathrm{ad}\,\Delta^q_{\gamma})^d(\Delta^p_{\beta})=s_d\Delta^{p+dq}_{\omega_d}, 
$$
where $s_1=1$, $s_d=\prod_{i=0}^{d-2}\langle\gamma,p+iq\rangle$, $d>1$, and $\omega_d=\langle\gamma,p+(d-1)q\rangle\beta-d\langle\beta,q\rangle\gamma$. 
\end{lemma}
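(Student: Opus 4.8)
The plan is to proceed by induction on $d$, turning each application of $\mathrm{ad}\,\Delta^q_\gamma$ into a single Lie bracket handled by Lemma~\ref{l1}, and then bookkeeping the scalar $s_d$ and the lower index $\omega_d$ separately.

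For the base case $d=1$, I would apply Lemma~\ref{l1} directly to $[\Delta^q_\gamma,\Delta^p_\beta]$, which yields $\Delta^{p+q}_{\langle\gamma,p\rangle\beta-\langle\beta,q\rangle\gamma}$. Reading the empty product as $1$, this matches $s_1\Delta^{p+q}_{\omega_1}$ with $s_1=1$ and $\omega_1=\langle\gamma,p\rangle\beta-\langle\beta,q\rangle\gamma$, as required.

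For the inductive step I would assume the formula for $d$ and write
$$(\mathrm{ad}\,\Delta^q_\gamma)^{d+1}(\Delta^p_\beta)=s_d\,[\Delta^q_\gamma,\Delta^{p+dq}_{\omega_d}].$$
A single application of Lemma~\ref{l1} gives degree $p+(d+1)q$ at once and lower index $\langle\gamma,p+dq\rangle\omega_d-\langle\omega_d,q\rangle\gamma$. The scalar side is painless: the recursion $s_{d+1}=s_d\langle\gamma,p+(d-1)q\rangle$ holds uniformly for all $d\ge1$ (including the transition $s_2=\langle\gamma,p\rangle$ from $s_1=1$), so it suffices to verify that the lower index just obtained equals $\langle\gamma,p+(d-1)q\rangle\,\omega_{d+1}$.

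The main work, and the only real obstacle, is this last identity, which I would check by expanding both sides in the vectors $\beta$ and $\gamma$ using the bilinearity of $\langle\cdot,\cdot\rangle$ and the defining expression $\omega_d=\langle\gamma,p+(d-1)q\rangle\beta-d\langle\beta,q\rangle\gamma$. The coefficient of $\beta$ is immediate, equal to $\langle\gamma,p+dq\rangle\langle\gamma,p+(d-1)q\rangle$ on both sides. For the coefficient of $\gamma$ I would collect terms into $\langle\beta,q\rangle\bigl(-d\langle\gamma,p+dq\rangle-\langle\gamma,p+(d-1)q\rangle+d\langle\gamma,q\rangle\bigr)$ and then substitute $\langle\gamma,p+dq\rangle=\langle\gamma,p\rangle+d\langle\gamma,q\rangle$ and $\langle\gamma,p+(d-1)q\rangle=\langle\gamma,p\rangle+(d-1)\langle\gamma,q\rangle$. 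The expression inside the parentheses collapses to $-(d+1)\langle\gamma,p\rangle-(d^2-1)\langle\gamma,q\rangle$, and the factorization $d^2-1=(d-1)(d+1)$ lets me pull out $-(d+1)\langle\gamma,p+(d-1)q\rangle$. Hence the $\gamma$-coefficient is $-(d+1)\langle\beta,q\rangle\langle\gamma,p+(d-1)q\rangle$, which is exactly the $\gamma$-coefficient of $\langle\gamma,p+(d-1)q\rangle\,\omega_{d+1}$. This closes the induction.
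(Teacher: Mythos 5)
Your proof is correct, and it follows the same route as the paper, which simply states that the lemma is proved by induction on $d$ (using the bracket formula of Lemma~\ref{l1}); your write-up supplies the details of exactly that induction. The base case, the scalar recursion $s_{d+1}=s_d\langle\gamma,p+(d-1)q\rangle$, and the identity $\langle\gamma,p+dq\rangle\omega_d-\langle\omega_d,q\rangle\gamma=\langle\gamma,p+(d-1)q\rangle\,\omega_{d+1}$ all check out.
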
 
\begin{proof}
The proof is by induction on $d$.
\end{proof}

Lemma~\ref{l36} provides a description of the subalgebra generated by two elements in $\mathcal{D}$ for the finite dimensional Lie algebra~$\mathfrak{g}(\mathcal{D})$ from Theorem~\ref{t2}. 

\begin{proposition}
Let $\langle\beta, q\rangle=0$ and $r\in\mathbb{Z}_{\ge 0}$ be the smallest number such that $\langle\gamma, p+rq\rangle=0$. Then the Lie algebra $\mathfrak{g}(\Delta^q_{\gamma}, \Delta^p_{\beta})$ has dimension $r+2$ and it is $(r+1)$-step nilpotent. Moreover, the derivations 
$$
\Delta^q_{\gamma}, \Delta^p_{\beta}, \Delta^{p+q}_{\beta},\ldots,\Delta^{p+rq}_{\beta}
$$
form a basis of this Lie algebra. 
\end{proposition}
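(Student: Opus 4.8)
The plan is to read off everything from Lemma~\ref{l36} once the hypothesis $\langle\beta,q\rangle=0$ is used. Substituting it into the formula for $\omega_d$ annihilates the $\gamma$-term, so $\omega_d=\langle\gamma,p+(d-1)q\rangle\beta$ is a scalar multiple of $\beta$; absorbing this scalar into $s_d$ yields
$$
(\mathrm{ad}\,\Delta^q_{\gamma})^d(\Delta^p_{\beta})=\Bigl(\textstyle\prod_{i=0}^{d-1}\langle\gamma,p+iq\rangle\Bigr)\,\Delta^{p+dq}_{\beta}.
$$
By the minimality of $r$ the factors $\langle\gamma,p+iq\rangle$ are nonzero for $i=0,\dots,r-1$ while $\langle\gamma,p+rq\rangle=0$; hence this coefficient is nonzero for $1\le d\le r$ and vanishes for $d=r+1$. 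In particular each $\Delta^{p+dq}_{\beta}$ with $0\le d\le r$ is a nonzero scalar multiple of an iterated bracket of the two generators, so the span $V$ of the $r+2$ listed derivations is contained in $\mathfrak{g}(\Delta^q_{\gamma},\Delta^p_{\beta})$.

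Next I would check that $V$ is a subalgebra, which forces equality $\mathfrak{g}=V$. Two families of brackets must be evaluated via Lemma~\ref{l1}. First, $[\Delta^q_{\gamma},\Delta^{p+dq}_{\beta}]=\langle\gamma,p+dq\rangle\,\Delta^{p+(d+1)q}_{\beta}$, the $\gamma$-term again dropping out because $\langle\beta,q\rangle=0$; this lies in $V$ for every $d$ (and equals $0$ for $d=r$). Second, since $\langle\beta,p+dq\rangle=\langle\beta,p\rangle$ is independent of $d$, Lemma~\ref{l2} (equivalently, one line of Lemma~\ref{l1}) gives $[\Delta^{p+dq}_{\beta},\Delta^{p+d'q}_{\beta}]=0$, so the derivations with lower index $\beta$ span an abelian subspace. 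Thus $V$ is closed under the bracket and contains the two generators, whence $\mathfrak{g}\subseteq V$ and $\mathfrak{g}=V$.

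For the dimension I would argue that the $r+2$ derivations are linearly independent. The degrees $p,p+q,\dots,p+rq$ are pairwise distinct, so the $\Delta^{p+dq}_{\beta}$ lie in pairwise distinct homogeneous components $W^{p+dq}$ and are independent. Adjoining $\Delta^q_{\gamma}$: either its degree $q$ is not among the $p+dq$, in which case independence is immediate, or $q=p+d_0q$ for some $d_0$, in which case the non-proportionality of $\beta$ and $\gamma$ (which is exactly the situation of condition~2) in Theorem~\ref{t2}) makes $\Delta^q_{\gamma}$ and $\Delta^q_{\beta}$ independent inside the $n$-dimensional component $W^q$. Hence $\dim\mathfrak{g}=r+2$ and the listed derivations form a basis. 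Finally, the nilpotency follows by computing the lower central series from the two bracket formulas above: the second term is $\langle\Delta^{p+q}_{\beta},\dots,\Delta^{p+rq}_{\beta}\rangle$, and by induction the $k$-th term is $\langle\Delta^{p+(k-1)q}_{\beta},\dots,\Delta^{p+rq}_{\beta}\rangle$ for $2\le k\le r+1$; the $(r+1)$-st term is the nonzero line $\langle\Delta^{p+rq}_{\beta}\rangle$, while the next term vanishes since $[\Delta^q_{\gamma},\Delta^{p+rq}_{\beta}]=\langle\gamma,p+rq\rangle\Delta^{p+(r+1)q}_{\beta}=0$. This gives nilpotency of class exactly $r+1$.

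Essentially all steps are routine consequences of Lemmas~\ref{l1} and~\ref{l36}; the one point that genuinely needs care is the linear independence of $\Delta^q_{\gamma}$ from the $\beta$-chain when the degree $q$ coincides with some $p+d_0q$, and this is precisely where the non-proportionality of $\beta$ and $\gamma$ is invoked.
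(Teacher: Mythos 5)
Your proof is correct and follows exactly the route the paper intends: the paper's own proof of this proposition is the single line ``It follows immediately from Lemma~\ref{l36} and Theorem~\ref{t2}'', and your argument simply supplies the omitted details (the simplification of $\omega_d$ under $\langle\beta,q\rangle=0$, closure of the span under the bracket via Lemma~\ref{l1}, linear independence from the grading, and the explicit lower central series). No gaps; the care you take with the possible coincidence of degrees $q=p+d_0q$ is a point the paper glosses over entirely.
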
 

\begin{proof}
It follows immediately from Lemma~\ref{l36} and Theorem~\ref{t2}. 
\end{proof}

Elements of this basis can be multiplied by suitable scalars such that for the new basis $X_1,X_2,\ldots,X_{r+2}$ we have
$[X_1,X_i]=X_{i+1}, 1<i<r+2$, and all other commutators are zero. Such a Lie algebra is called a \emph{model filiform} Lie algebra; see e.g.~\cite{GKh}. 

\begin{example}
Let $D_1=\Delta^q_{\gamma}=x_1^2\partial_1-x_1x_2\partial_2$ and $D_2=\Delta^p_{\beta}=x_2^2\partial_2$. Here 
$$
q=(1,0),\quad \gamma=(1,-1),\quad p=(0,1),\quad \beta=(0,1).
$$ 
It follows that $\langle\beta, q\rangle=0$, $r=1$, and a basis of the algebra $\mathfrak{g}(D_1,D_2)$ is $\{D_1,D_2,D_3\}$, where $D_3=[D_1,D_2]=-\Delta^{p+q}_{\beta}=-x_1x_2^2\partial_2$. 
\end{example}

\begin{example}
Let $D_1=\Delta^q_{\gamma}=x_1^2\partial_1-2x_1x_2\partial_2$ and $D_2=\Delta^p_{\beta}=x_2^2\partial_2$. Here 
$$
q=(1,0),\quad \gamma=(1,-2),\quad p=(0,1),\quad \beta=(0,1).
$$ 
It follows that $\langle\beta, q\rangle=0$, $r=2$, and a basis of the algebra $\mathfrak{g}(D_1,D_2)$ is $\{D_1,D_2,D_3,D_4\}$, where 
$$
D_3=[D_1,D_2]=-2\Delta^{p+q}_{\beta}=-2x_1x_2^2\partial_2 \quad \text{and} \quad
D_4=[D_1,D_3]=2\Delta^{p+2q}_{\beta}=2x_1^2x_2^2\partial_2.
$$
\end{example}

\begin{example}
Let $m,l \in \mathbb{Z}_{>0}$. Set
$$
D_1=\Delta^q_{\gamma}=x_1x_2(2x_1\partial_1-x_2\partial_2)\quad \text{and}\quad D_2=\Delta^p_{\beta}=x_1^mx_2^{2m+l}(x_1\partial_1-x_2\partial_2),
$$ 
where 
$$
q=(1,1),\quad \gamma=(2,-1),\quad p=(m,2m+l),\quad \beta=(1,-1).
$$ 
Hence $\langle\beta, q\rangle=0$ and $r=l$. It follows that the Lie algebra $\mathfrak{g}(D_1,D_2)$ has dimension $l+2$ and is $(l+1)$-step nilpotent.  
\end{example}

In our opinion, Theorem~\ref{t2} describes a natural class of finite-dimensional graded nilpotent Lie algebras which can be defined in terms of multiplicities of occurrences of each vector $\beta(i)$, values of $\langle\beta(i),p(i)\rangle$ and the set of non-negative integers $\{r_{ij}\}$. It would be interesting to develop a structure theory of such Lie algebras.

\section{Concluding remarks and open questions}\label{s4}

The problem of obtaining a finite dimensionality criterion for a Lie algebra generated by a finite set of homogeneous derivations of types~I and~II seems to be important. 

\begin{problem}
Find a criterion of finite dimensionality of a Lie algebra generated by a finite set of homogeneous derivations of the ring $\mathbb{K}[x_1,\ldots,x_n]$ and describe the structure of the arising finite-dimensional Lie algebras. 
\end{problem}

Theorem~\ref{t1} and Theorem~\ref{t2} give necessary conditions of finite dimensionality for a set of derivations of types~I and~II. Suitable algebraic and combinatorial concepts have yet to be developed to obtain a necessary and sufficient condition. It is also natural to ask the more general question of describing the finite-dimensional homogeneous subalgebras of the Witt algebra $\mathcal{W}_n$. 

\begin{problem}
Find a criterion for finite dimensionality of the Lie algebra generated by a finite set of homogeneous derivations of the ring of Laurent polynomials $\mathbb{K}[x_1,x_1^{-1},\ldots,x_n,x_n^{-1}]$ and describe the structure of the arising finite-dimensional Lie algebras.
\end{problem}

It would be interesting to describe finite-dimensional Lie algebras generated by homogeneous derivations for other graded algebras. We say that a grading is \emph{fine} if all its homogeneous components are at most one-dimensional. It is well known that an integrally closed affine algebra with a fine grading is exactly the semigroup algebra $\mathbb{K}[S(\sigma)]$ of the semigroup of integer points $S(\sigma)$ in a polyhedral cone $\sigma$ in the vector space $\mathbb{Q}^n$ over the field of rational numbers. In other words, this is the algebra of regular functions on a normal affine toric variety. 

Homogeneous locally nilpotent derivations of such algebras are given by remarkable combinatorial objects called Demazure roots. The concept of a Demazure root goes back to~\cite{De}; in the present context it is invented in~\cite{Li1,Li2}. One of the aims of this paper is to develop an approach to describing homogeneous derivations of polynomial algebras that are not locally nilpotent, which is analogous to the description of homogeneous locally nilpotent derivations in terms of Demazure roots. The above proposed description of derivations of type~II provides such an approach. In the future, we plan to generalize Theorem~\ref{t2} to the case of semigroup algebras of affine semigroups with fine grading. In the case of Theorem~\ref{t1}, this has been done in~\cite{ALS, AZ}.  

\begin{problem}
Let $\mathbb{K}[S(\sigma)]$ be the semigroup algebra of the semigroup $S(\sigma)$ of integer points in a polyhedral cone $\sigma$ in the vector space $\mathbb{Q}^n$ over the field of rational numbers. 
Find a criterion for finite dimensionality of the Lie algebra generated by a finite set of homogeneous derivations of the ring $\mathbb{K}[S(\sigma)]$ with respect to the fine grading and describe the structure of the arising finite-dimensional Lie algebras.  
\end{problem}

An essential motivation for solving the above problems over an algebraically closed field of characteristic zero is the following remarkable result obtained recently by Hanspeter Kraft and
Mikhail Zaidenberg. Let $X$ be an irreducible affine algebraic variety and $G$ be a subgroup in $\mathrm{Aut}(X)$ generated as an abstract group by a family of connected algebraic subgroups $G_i$. Then~\cite[Theorem~A]{KZ} claims that $G$ is an algebraic group if and only if the images of the tangent algebras $\mathrm{Lie}(G_i)$ to the groups $G_i$ in the Lie algebra of all polynomial vector fields on the variety $X$ generate a finite-dimensional Lie subalgebra~$L(G)$. Moreover, in this situation the Lie algebra $L(G)$ is the tangent algebra to the algebraic group $G$.

We say that a linear algebraic group $G$ is \emph{special} if it is generated by one-parameter subgroups isomorphic to the additive group $(\mathbb{K},+)$ of the ground field. As shown in~\cite[Lemma~1.1]{Po}, a connected linear algebraic group $G$ is special if and only if $G$ has no nontrivial character. The same condition is equivalent to the fact that a maximal reductive subgroup of $G$ is semisimple, or that the solvable radical of $G$ is unipotent. 

It is well known there is one-to-one correspondence between one-parameter additive subgroups of the group $\mathrm{Aut}(X)$ and locally nilpotent derivations on the algebra of regular functions $\mathbb{K}[X]$; cf.~\cite[Section~1.5]{Fr}. We conclude that Theorem~\ref{t1} gives a complete description of the special subgroups of the group $\mathrm{Aut}(\mathbb{A}^n)$ which are normalized by the maximal torus of the group $\mathrm{Aut}(\mathbb{A}^n)$.  In particular, Theorem~\ref{t1} and Remark~\ref{r1} imply the following result. 

\begin{proposition}
Assume that the ground field $\mathbb{K}$ is an algebraically closed field of characteristic zero. If an action of a special algebraic group $G$ on the affine space $\mathbb{A}^n$ is normalized by the maximal torus of non-degenerate diagonal matrices, then a semisimple part of the group $G$ coincides with the semisimple part of a Levi subgroup of the group $\text{SL}_n$ with respect to the standard action of $\text{SL}_n$ on $\mathbb{A}^n$.
\end{proposition}

\begin{proof}
It suffices to notice that homogeneous locally nilpotent derivations of zero weight are precisely $x_i\partial_j$ with $i\ne j$. These derivations give rise to one-dimensional subgroups $E+cE_{ij}$, $c\in\mathbb{K}$, of the group $\text{SL}_n$, where $E$ is the unit matrix and $E_{ij}$ are the matrix units. Finally, such derivations correspond to vertices of oriented cycles of the graph $\Gamma(\mathbb{D})$  if and only if the corresponding subgroups generate the semisimple part of some Levi subgroup of the group $\text{SL}_n$;
see~\cite[Theorem~30.1]{Hum}. 
\end{proof}

It would be interesting to describe finite-dimensional subalgebras generated by locally nilpotent derivations in Lie algebras of derivations not only for the ring of polynomials but also for another affine algebras.

Finally, it is of special interest to study the structural properties of infinite-dimensional subalgebras of the Lie algebras $W_n$ and $\mathcal{W}_n$ generated by a finite set of homogeneous derivations. We plan to develop the corresponding structure theory in subsequent publications. 

\bigskip

\emph{Acknowledgements.} The authors are grateful to the anonymous referee for a careful reading of the text and valuable suggestions. Thanks are also due to Andriy Regeta for useful comments on the results cited in the article.


\begin{thebibliography}{20}
%
\bibitem[ALS21]{ALS} 
Ivan Arzhantsev, Alvaro Liendo, and Taras Stasyuk. Lie algebras of vertical derivations on semiaffine varieties with torus actions. J. Pure Appl. Algebra~225 (2021), no.~2, article~106499
%
\bibitem[AMP11]{AMP} 
Ivan Arzhantsev, Evgeny Makedonskii, and Anatoly Petravchuk. Finite-dimensional subalgebras in polynomial Lie algebras of rank one. Ukrainian Math. J. 63 (2011), no.~5, 827-832 
%
\bibitem[ASh]{ASh} 
Ivan Arzhantsev and Kirill Shakhmatov. Some finiteness results on triangular automorphisms. Results Math.~77 (2022), no.~2, article~75
%
\bibitem[AZ22]{AZ} 
Ivan Arzhantsev and Mikhail Zaidenberg. Tits-type alternative for groups acting on toric affine varieties. Int. Math. Res. Not. IMRN~2022 (2022), no.~11, 8162-8195
%
\bibitem[B16]{Ba-1} 
Vladimir Bavula. The groups of automorphisms of the Witt $W_n$ and Virasoro Lie algebras. Czechoslovak Math. J. 66(141) (2016), no.~4, 1129-1141
%
\bibitem[B17]{Ba-2} 
Vladimir Bavula. The group of automorphisms of the Lie algebra of derivations of a polynomial algebra. J.~Algebra Appl. 16 (2017), no.~5, article~1750088
%
\bibitem[BF18]{BF} 
Yuly Billig and Vyacheslav Futorny. Lie algebras of vector fields on smooth affine varieties. Comm. Algebra 46 (2018), no.~8, 3413-3429
%
\bibitem[BL02]{BL} 
Victor Buchstaber and Dmitry Leikin. Polynomial Lie algebras. Funct. Anal. Appl. 36 (2002), no.~4, 267-280
%
\bibitem[C09]{Ca} 
\'Elie Cartan. Les groups de transformations continus, infinis, simples. Ann. Sci. ENS, 3-e s\'erie 26 (1909), 93-161
%
\bibitem[Ch95]{Ch} 
Liang Chen. Differential operator Lie algebras on the ring of Laurent polynomials. Commun. Math. Phys. 167 (1995), no.~2, 431-469
%
\bibitem[De70]{De} 
Michel Demazure. Sous-groupes alg\'ebriques de rang maximum du groupe de Cremona. Ann. Sci. \'Ec. Norm. Sup\'er.~3 (1970), 507-588
%
\bibitem[D02]{Dr} 
Jan Draisma. On a conjecture of Sophus Lie. Proceedings of the workshop Differential Equations and the Stokes Phenomenon, (Groningen, The Netherlands, May 28-30, 2001), World Sci. Publ., River Edge, NJ, 2002, 65-87
%
\bibitem[F17]{Fr} 
Gene Freudenburg. \emph{Algebraic theory of locally nilpotent derivations}. Encyclopaedia Math. Sci.~136, Springer-Verlag, Berlin, Heidelberg, 2017
%
\bibitem[FK18]{FK} 
Jean-Philippe Furter and Hanspeter Kraft. On the geometry of the automorphism groups of affine varieties. arXiv:1809.04175,  179 pages
%
\bibitem[G20]{Go} 
Vladimir Gorbatsevich. Polynomial realizations of finite-dimensional Lie algebras. Funct. Anal. Appl. 54 (2020), no.~2, 93-99 
%
\bibitem[GK96]{GKh} 
Michel Goze and Yusupdjan Khakimdjanov. \emph{Nilpotent Lie Algebras}. Kluwer Academic Publishers, 1996
%
\bibitem[Gr94]{Gr} 
Hans Gradl. Realization of Lie algebras with polynomial vector fields. Non-associative algebra and its applications, 3rd International Conference, Oviedo, Spain, 1993, Kluwer Academic 
Publishers, Dordrecht, 1994, 171-175
%
\bibitem[Hu75]{Hum}
James Humphreys. \emph{Linear Algebraic Groups}. Grad. Texts in Math.~21, Springer Verlag, New York, 1975
%
\bibitem[KR17]{KR} 
Hanspeter Kraft and Andriy Regeta. Automorphisms of the Lie algebra of vector fields on affine $n$-space. J. Eur. Math. Soc. (JEMS) 19 (2017), no.~5, 1577-1588
%
\bibitem[KZ24]{KZ} 
Hanspeter Kraft and Mikhail Zaidenberg. Algebraically generated groups and their Lie algebras. J.~Lond. Math. Soc. 109 (2024), no.~2, article~e12866
%
\bibitem[K93]{Ku} 
Victor Kulikov. Generalized and local Jacobian problems. Russian Acad. Sci. Izv. Math. 41 (1993), no.~2, 351-365
%
\bibitem[Li10a]{Li1} 
Alvaro Liendo. $\GG_a$-actions of fiber type on affine $\mathbb{T}$-varieties. J. Algebra 324 (2010), no.~12, 3653-3665
%
\bibitem[Li10b]{Li2} 
Alvaro Liendo. Affine $\mathbb{T}$-varieties of complexity one and locally nilpotent derivations. Transform. Groups 15 (2010), no.~2, 389-425
%
\bibitem[P11]{Po} 
Vladimir Popov. On the Makar-Limanov, Derksen invariants, and finite automorphism groups of algebraic varieties. In: Affine Algebraic Geometry: The Russell Festschrift, CRM Proceedings
and Lecture Notes, vol.~54, Amer. Math. Soc., 2011, 289-311
%
\bibitem[R86]{Ru} 
Alexey Rudakov. Subalgebras and automorphisms of Lie algebras of Cartan type. Funct. Anal. Appl. 20 (1986), no.~1, 72-73
%
\bibitem[S96]{Sie} 
Thomas Siebert. Lie algebras of derivations and affine algebraic geometry over fields of characteristic~$0$. Math. Ann. 305 (1996), no.~1, 271-286
%
\end{thebibliography}
\end{document}